\documentclass[11pt, a4paper,twoside,reqno]{amsart}

\usepackage[ a4paper, left=1.75cm, right=1.75cm, top=1.5cm, bottom=1.5cm, includeheadfoot ]{geometry} 
\makeatletter \def\@settitle{%
\begin{center} \baselineskip14\p@\relax \normalfont\LARGE\scshape\bfseries \@title \end{center} } \def\@setauthors{ \begingroup \def\thanks{\protect\thanks@warning}
\trivlist \centering\small \@topsep30\p@\relax \advance\@topsep by -\baselineskip \item\relax \author@andify\authors \def\\{\protect\linebreak}
\authors \ifx\@empty\contribs \else ,\penalty-3\space\@setcontribs \@closetoccontribs \fi \endtrivlist \endgroup } \makeatother 

\makeatletter \def\subsection{\@startsection{subsection}{2}%
\z@ {.5\linespacing\@plus.7\linespacing}%
{.5\linespacing}%
{\normalfont\large\bfseries}} \def\subsubsection{\@startsection{subsubsection}{3}%
\z@ {.5\linespacing\@plus.7\linespacing}%
{.5\linespacing}%
{\normalfont\itshape}} \makeatother 
\usepackage[usenames,dvipsnames]{xcolor} \definecolor{darkblue}{rgb}{0.0,0.0,0.45} 
\usepackage{amsmath} 
\usepackage{amssymb} 
\usepackage{amsfonts} 
\usepackage{amsthm} 
\usepackage{mathtools} 
\usepackage{mathrsfs} 
\usepackage{dsfont} 
\usepackage{graphicx} 
\usepackage{multirow} 
\usepackage{makecell} 
\usepackage{fancyhdr} \usepackage[amssymb,thickqspace]{SIunits} 
\usepackage[dvipsnames]{xcolor} \usepackage{hyperref} \definecolor{LinkBlue}{RGB}{25, 75, 135} \definecolor{CiteRed}{RGB}{155, 35, 45} \definecolor{UrlGreen}{RGB}{20, 105, 85} \hypersetup{ colorlinks=true, linkcolor=LinkBlue, citecolor=CiteRed, urlcolor=UrlGreen, filecolor=LinkBlue, menucolor=LinkBlue, anchorcolor=LinkBlue, pdfauthor={Mohammad Boveiri and Peyman Mohajerin Esfahani}, pdftitle={Discrete-Time Adaptive Control in High Dimensions: Near Dimension-Free Performance via Mirror Descent}, pdfsubject={Adaptive control of high-dimensional discrete-time nonlinear systems}, pdfkeywords={adaptive control, mirror descent, high-dimensional systems, sparse parameters, low-rank matrices, regret bounds}, pdfcreator={LaTeX}, pdfproducer={pdfTeX}, bookmarks=true, bookmarksopen=true, bookmarksnumbered=true, pdfstartview=FitH, unicode=true, plainpages=false, raiselinks=true }
\allowdisplaybreaks 
\date{\today}

\usepackage{changepage} 
\usepackage{textcomp} 
\usepackage{verbatim} 
\usepackage{empheq} 

\usepackage[most]{tcolorbox} 
\usepackage[ backend=biber, style=ieee, citestyle=numeric-comp, sorting=none, giveninits=true ]{biblatex}
\usepackage{xcolor}
\usepackage{subcaption}
    \theoremstyle{plain}
    \newtheorem{theorem}{Theorem}
    \newtheorem*{theorem*}{Theorem}
    \newtheorem*{definition*}{Definition}
    \newtheorem{assumption}{Assumption}
    \newtheorem{corollary}[theorem]{Corollary}
   \newtheorem{lemma}[theorem]{Lemma}
    \newtheorem{remark}{Remark}
    \newtheorem*{example*}{Example}

    \newtheorem*{problem*}{Problem}

\DeclareMathOperator{\Tr}{Tr}

\def\BibTeX{{\rm B\kern-.05em{\sc i\kern-.025em b}\kern-.08em
    T\kern-.1667em\lower.7ex\hbox{E}\kern-.125emX}}
\begin{document}
\allowdisplaybreaks
\pagestyle{empty}

\title{ Discrete-Time  Adaptive Control in High Dimensions:\\ Near Dimension-Free  Performance via Mirror Descent}

\author[M. Boveiri]{Mohammad Boveiri$^{1}$}
\author[P. {Mohajerin Esfahani}]{Peyman {Mohajerin Esfahani$^{1,2}$}
\\
\\
$^1$Delft University of Technology, The Netherlands\\
$^2$University of Toronto, Canada}
\thanks{This work was partially supported by the ERC grant TRUST-949796 and the NSERC Grant RGPIN-2025-06544.}
\maketitle
\begin{abstract}
Motivated by the use of modern high-capacity models in real-time control problems, this paper studies the adaptive control of high-dimensional discrete-time nonlinear systems with an unknown matrix-valued parameter. We focus on regimes where the number of unknown parameter entries is large, but the parameter matrix possesses exploitable structure, such as entrywise sparsity, group sparsity, low rank, and row-stochastic or density-matrix structure. To quantify transient performance, we consider a regret criterion relative to a nominal controller with full knowledge of the true parameter. We show that standard Euclidean update schemes, including recursive least squares and gradient descent, are ill-suited to this setting: their transient performance deteriorates as the dimension increases, even when the true parameter has low intrinsic complexity. To address this limitation, we propose a novel class of mirror-descent-type adaptive laws equipped with a non-Euclidean Polyak-type step size that exploit the geometry induced by the parameter structure. For the proposed update laws, we establish asymptotic state convergence and derive regret bounds with at most logarithmic dependence on the ambient dimension. Numerical experiments demonstrate the effectiveness of the proposed schemes. 
\end{abstract}

\section{Introduction}
\label{sec:introduction}

\subsection{Problem description}

In this paper, we consider the adaptive control of uncertain discrete-time nonlinear systems of the form \begin{equation}\label{eq.dynamics} X_{t+1} = \mathcal{A}(X_t) + \mathcal{B}\bigl(\Theta\Psi(X_t,t)+U_t\bigr), \end{equation} where $X_t \in \mathbb{R}^{n_1 \times n_2}$ is the system state, $U_t \in \mathbb{R}^{m \times n_2}$ is the control input, and $\Theta \in \Omega \subseteq \mathbb{R}^{m \times k}$ is an unknown parameter matrix. The known mapping $\Psi:\mathbb{R}^{n_1 \times n_2}\times\mathbb{Z}_{+} \to\mathbb{R}^{k\times n_2}$ is a possibly time-varying feature map, or regressor. The known linear operators $\mathcal{A}:\mathbb{R}^{n_1 \times n_2} \to\mathbb{R}^{n_1 \times n_2}$ and $\mathcal{B}:\mathbb{R}^{m \times n_2} \to\mathbb{R}^{n_1 \times n_2}$ describe the nominal dynamics and the control channel, respectively. Since the uncertain term $\Theta\Psi(X_t,t)$ enters the dynamics through the same channel as the control input, the system has a matched uncertainty structure \cite{Krstic}. Throughout this study, we allow the regressor to be unbounded, provided that it grows at most linearly with the state, as formalized below. \begin{assumption}[Linear growth] \label{ass:linear-growth-regressor} The regressor $\Psi$ satisfies \begin{equation}\label{eq:linear-growth-regressor} \|\Psi(X,t)\| \leq c_1+c_2\|X\|, \qquad \forall (X,t)\in \mathbb{R}^{n_1\times n_2}\times\mathbb{Z}_{+}, \end{equation} for some constants $c_1,c_2\in \mathbb{R}_{+}$, which need not be known. \end{assumption}

The control task is to track the desired state trajectory $\{X^{\rm d}_t\}_{t \in \mathbb{Z}_{+}}$ generated by the reference model
\begin{equation}\label{eq.dynamcis.ref}
    X^{\rm d}_{t+1} = (\mathcal{A} - \mathcal{B}\mathcal{K})(X^{\rm d}_t) + \mathcal{B} (U^{\rm d}_t),
\end{equation}
where $U_t^{\mathrm d} \in \mathbb{R}^{m \times n_2}$ is a bounded reference input and $\mathcal{K}: \mathbb{R}^{n_1 \times n_2} \to \mathbb{R}^{m \times n_2}$ is a linear feedback operator chosen such that the closed-loop operator $\mathcal{A} - \mathcal{B}\mathcal{K}$ is Schur stable. We are particularly interested in high-dimensional settings where the number of unknown parameters, $d=mk$, is large. 

Our objective is to develop adaptive control schemes with performance guarantees that are dimension-free, up to logarithmic factors, by leveraging structural properties of $\Theta$, such as sparsity, low-rank structure, or stochastic matrix structure. Although the system in \eqref{eq.dynamics} can be rewritten in standard vector form by stacking the columns of $X_t$, we retain the matrix representation in order to exploit structural properties of the uncertainty term $\Theta \Psi(X_t,t)$, such as low-rank structure, which are more naturally expressed in matrix form.

\textbf{Control law.} Given an estimate $\widehat{\Theta}_t$ of the unknown matrix $\Theta$ at time $t$, we apply the certainty-equivalent control law
\begin{equation}\label{eq:control.law}
    U_t(X_t, \widehat{\Theta}_t) = -\mathcal{K}(X_t) - \widehat{\Theta}_t \Psi(X_t,t)+U^{\rm d}_t.
\end{equation}
  The term `certainty-equivalent' indicates that the above controller is designed by treating the estimate $\widehat{\Theta}_t$ as the true parameter $\Theta$; in the ideal case where $\widehat{\Theta}_t = \Theta$, the control law \eqref{eq:control.law} ensures the cancellation of the unknown nonlinearity and convergence of $X_t$ to $X^{\rm d}_t$.  In the adaptive setting, the estimate $\widehat{\Theta}_t$ is updated online based on the observed state evolution.
  
  We couple the controller~\eqref{eq:control.law} with a family of non-Euclidean parameter update laws. These updates are driven by the instantaneous performance loss induced by the parameter-estimation error $\Theta-\widehat{\Theta}_t$ in the closed-loop system. Specifically, after applying the control input at time $t$, we define
\begin{equation}\label{eq:cost}
    J_t(\widehat{\Theta}_t)
    :=
    \frac{1}{2}
    \left\|
        \mathcal{B}\big((\Theta-\widehat{\Theta}_t)\Psi(X_t,t)\big)
    \right\|_\mathrm{F}^2
    =
    \frac{1}{2}\|\widetilde{X}_{t+1}\|_\mathrm{F}^2 ,
\end{equation}
where 
$\widetilde{X}_{t+1}
    :=
    X_{t+1}
    -
    (\mathcal{A}-\mathcal{B}\mathcal{K})(X_t)
    -
    \mathcal{B}(U^{\rm d}_t)$.
The gradient of $J_t$ with respect to $\widehat{\Theta}_t$ is 
\begin{equation}\label{eq:cost.gradient}
    \nabla J_t(\widehat{\Theta}_t)
    =
    -\mathcal{B}^{\top}\big(\widetilde{X}_{t+1}\big)
    \Psi(X_t,t)^\top ,
\end{equation}
where $\mathcal{B}^{\top}$ denotes the adjoint of the linear operator $\mathcal{B}$; see Section~\ref{sec:main.prel}. Observe that, although $J_t$ is associated with the control action applied at time $t$, both $J_t$ and $\nabla J_t(\widehat{\Theta}_t)$ become available only at time $t+1$, once the residual $\widetilde{X}_{t+1}$ can be computed from the observed state $X_{t+1}$.

\subsection{Contributions}
\textbf{Parameter update law.}
Let $\mathbb{R}^{m\times k}$ be equipped with a norm $\|\cdot\|$, and let
$\|\cdot\|_*$ denote the associated dual norm. Moreover, let
$f:\mathbb{R}^{m\times k}\to\mathbb R\cup\{+\infty\}$ be a $\mu$-strongly convex function on
$\Omega\subset\mathbb{R}^{m\times k}$ with respect to $\|\cdot\|$, and let
$f^*$ denote its Fenchel conjugate. The estimate
$\widehat{\Theta}_t$ in the control law \eqref{eq:control.law} is updated
according to the following mirror-descent-type recursion:
\begin{tcolorbox}[ colback=blue!6, colframe=blue!60!black, boxrule=0.6pt, arc=1mm, left=6pt, right=6pt, top=4pt, bottom=4pt ] \begin{subequations}\label{eq:update.law} \begin{equation}\label{eq:update.law.main} \begin{aligned} Z_{t+1} &= Z_t-\eta_{t+1}\nabla J_t(\widehat{\Theta}_t), \qquad Z_0\in\partial f(\widehat{\Theta}_0),\\ \widehat{\Theta}_{t+1} &=\nabla f^*(Z_{t+1}), \end{aligned} \end{equation} where the step size is selected as \begin{equation}\label{eq:step-size} \eta_{t+1} = \frac{2\mu\,J_t(\widehat{\Theta}_t)} {\epsilon_{t+1} +\lVert\nabla J_t(\widehat{\Theta}_t)\rVert_*^2} \end{equation} for some nonnegative and bounded sequence $\{\epsilon_{t+1}\}_{t\in\mathbb{Z}_+}$. \end{subequations} \end{tcolorbox}
\noindent Interestingly, \eqref{eq:step-size} may be viewed as a non-Euclidean and online 
generalization of the Polyak step size; see Remark~\ref{remark.stepsize}. Within this framework, we establish the following results.

\textbf{(i) Stability and state convergence.} We establish boundedness and asymptotic convergence properties of the closed-loop system under the parameter update law \eqref{eq:update.law}. Specifically, we show that the system state $X_t$ and parameter estimate $\widehat{\Theta}_t$ remain bounded and that $X_t$ converges asymptotically to the desired trajectory $X_t^{\rm d}$ (Theorem~\ref{Theorem.stability}). We emphasize that the step-size choice \eqref{eq:step-size} is essential for guaranteeing stability and state convergence when the map $\Psi$ is not necessarily bounded and may grow with the state according to Assumption~\ref{ass:linear-growth-regressor}.

\textbf{(ii) Performance.}
We quantify transient performance via the cumulative loss (regret) defined by
\begin{equation}\label{eq:regret.intro}
        \operatorname{Reg}(T):=\sum_{t=0}^{T-1} J_t(\widehat{\Theta}_t),\quad\quad T>0.
\end{equation}
Observe that a controller of the form \eqref{eq:control.law} with full knowledge of the true parameter $\Theta$, namely $U_t(X_t,\Theta)$,
incurs zero regret, since $J_t(\Theta)=0$ for all $t$.
Thus, \(\operatorname{Reg}(T)\) characterizes the cumulative performance loss over the horizon \(T\) caused by estimating $\Theta$ through \eqref{eq:update.law}, rather than having access to it a priori. Given this setup, we show that the regret admits the bound
\begin{equation}\label{eq:intro.regret.bound}
     \operatorname{Reg}(T)
     \leq
     \sqrt{
     \frac{D_f(\Theta,\widehat{\Theta}_0;Z_0)}{2\mu}
    \sum_{t=0}^{T-1} \left(\epsilon_{t+1}+
        \| \nabla J_t(\widehat{\Theta}_t)\|_{*}^{2}
     \right)
     },
\end{equation}
where  $D_f$ denotes the generalized Bregman divergence generated by the mirror map $f$ (Theorem~\ref{Theorem.regret}). Since boundedness of $X_t$ and $\widehat{\Theta}_t$ has already been established, $\|\nabla J_t(\widehat{\Theta}_t)\|_{*}$ is bounded. Hence, \eqref{eq:intro.regret.bound} implies that $\operatorname{Reg}(T)$ scales at most as $\sqrt{T}$ with the horizon.

\textbf{(iii) Dimension-free regret via geometry-aware updates.} 
The regret bound \eqref{eq:intro.regret.bound} depends on
$D_f$ and $\|\cdot\|_{*}$, and thus on the geometry
induced by the mirror map $f$. For example, with the Euclidean mirror map
$f(\Theta)=\frac{1}{2}\|\Theta\|_\mathrm{F}^2$, the update law
\eqref{eq:update.law.main} reduces to a gradient-descent-type update. In this case, the
dual norm is the Frobenius norm, and, assuming $\widehat{\Theta}_0=0$, 
\begin{equation*}
  D_f(\Theta,\widehat{\Theta}_0;Z_0)=\frac{1}{2}\|\Theta\|_\mathrm{F}^2, 
  \quad
  \|\nabla J_t(\widehat{\Theta}_t)\|_{*}
  =\|\nabla J_t(\widehat{\Theta}_t)\|_\mathrm{F}.
\end{equation*}
Hence, even if $\Theta$ is $s$-sparse with $s\ll d$, the regret is $\mathcal{O}(\sqrt{d})$, since $\|\nabla J_t(\widehat{\Theta}_t)\|_\mathrm{F}=\mathcal{O}(\sqrt{d})$.    
This dimensional dependence of Euclidean-based schemes is not an artifact of conservative analysis; it is also observed empirically (see Section~\ref{sec:moti}).

In contrast, by selecting the mirror map according to the structure of $\Theta$, we show that regret bounds with at most logarithmic dependence on the dimension can be achieved. 
We consider two classes of structured parameter sets:

\begin{itemize}
\item \textbf{Sparsity structures.} We consider several notions of sparsity for $\Theta$, including (a) entrywise sparsity, where at most $s$ entries of $\Theta$ are nonzero; (b) group sparsity, where the nonzero entries of $\Theta$ are structured into groups, such as in row-wise or column-wise sparsity; and (c) low-rank structure, where $\operatorname{rank}(\Theta) \leq s$; see Section~\ref{sec:sparse}.

    \item \textbf{Simplex-type sets.} We also consider the case where the
    unknown parameter $\Theta$ belongs to a simplex-type constraint set.
    Specifically, we study two representative examples: the standard
    probability simplex and the spectraplex (i.e., the set of density
    matrices); see Section~\ref{sec:simplex}.
\end{itemize}  
For each case above, we identify a suitable mirror map, derive the corresponding explicit update law, and establish regret bounds with at most logarithmic dependence on the dimension.

\subsection{Motivation and related literature}
Historically, one of the primary objectives of adaptive control theory has been the development of universal controllers capable of managing broad classes of dynamical systems across diverse operating scenarios \cite{astromhist,ANNASWAMY202118}. This goal naturally necessitates high-dimensional modeling to capture complex system dynamics, exogenous disturbances, and potential faults. Traditionally, however, overparameterized modeling has been viewed as problematic (see, e.g., \cite{Krstic1992AdaptiveNonlinear}) primarily due to the severe degradation in transient performance observed in existing schemes as the dimensionality increases (see Section~\ref{sec:moti}). This tension between expressive, high-capacity modeling and reliable transient behavior in adaptive control design has been a central obstacle to integrating modern high-capacity models into real-time control systems.

In this paper, we show that such performance degradation is not an inherent consequence of using high-dimensional models. Rather, it arises largely from the use of Euclidean parameter update laws, which fail to preserve or exploit structural properties commonly present in high-dimensional models, such as sparsity. By incorporating such structural properties directly into the geometry of the parameter update law, we obtain transient performance guarantees that depend on the intrinsic complexity of the unknown parameter rather than on the ambient
dimension.
We believe that, by addressing this fundamental limitation, this paper paves the way for the use of high-capacity models in adaptive control without sacrificing transient performance.

To the best of the authors' knowledge, this work provides the first dimension-independent performance guarantees for discrete-time adaptive systems in high-dimensional settings.

\textbf{Mirror descent}. Mirror-descent algorithms, introduced in
\cite{NemirovskiYudin1983}, have been widely used in optimization and
machine learning problems, including sparse signal recovery
\cite{juditsky2023sparse}, $\ell_1$- and nuclear-norm minimization
\cite{nesterov2013first}, online optimization \cite{Jadbabaie,tac_md},
and saddle-point problems \cite{Nemirovski2004MirrorProx,nemirovski2009robust}. From a purely algorithmic perspective, several aspects of our proposed mirror update law, such as its Polyak-type step size \eqref{eq:step-size} and the allowance for a nondifferentiable mirror map, are nonstandard and specifically tailored to the adaptive control framework. These features  address challenges that are unique to the adaptive control setting, such as establishing stability of the interconnected plant--controller system, boundedness of the closed-loop signals, and asymptotic convergence of the tracking error. 

\textbf{Regret as a measure of transient performance.} Regret is a performance measure that evaluates an algorithm’s performance relative to a benchmark strategy. It has become a standard criterion for assessing the finite-time performance of algorithms in online decision-making problems, including online convex optimization \cite{Hazan2016}, multi-armed bandits \cite{LattimoreSzepesvari2020,BubeckC12}, and reinforcement learning and control \cite{HazanSingh2026}. In adaptive control, regret quantifies the performance loss incurred by a learning controller relative to a benchmark controller with full knowledge of the system dynamics. Existing regret analyses for adaptive control have primarily emphasized the dependence of the bound on the time horizon $T$, rather than on the ambient dimension of the unknown parameters. For instance, in linear quadratic regulator problems, sublinear regret bounds, often of order $\sqrt{T}$, have been established under various assumptions on data generation, stability, excitation, and stochastic noise processes \cite{pmlr-v19-abbasi-yadkori11a,NEURIPS2018_0ae3f79a,Fara_tac,florian,NEURIPS.adap}. The regret definition adopted in this work is similar in spirit to that of \cite{pmlr-v144-boffi21a}, measuring the cumulative performance gap between our adaptive controller and a certainty-equivalence benchmark with access to the true system parameter.

\textbf{Paper structure.} The remainder of this paper is organized as follows. Section~\ref{sec:moti} presents a scalar motivating example illustrating how the transient performance of standard Euclidean-based parameter update laws deteriorates as the ambient dimension grows. Section~\ref{sec:main.result} develops the proposed mirror descent adaptive control framework, proves the boundedness of the closed-loop signals, establishes state convergence, and derives a general regret bound for the proposed scheme. Section~\ref{sec:sparse} specializes the framework to parameter classes with specific sparsity structures, including entrywise sparsity, group sparsity, and low-rank structure, and derives the corresponding regret bounds. Section~\ref{sec:simplex} extends the approach to simplex-type constraint sets, such as the standard simplex and the spectraplex. Section~\ref{sec:stochastic} extends the proposed update law to stochastic systems subject to process noise. Section~\ref{sec:simu} presents numerical simulations that validate the theoretical results. Finally, Section~\ref{sec:conclusions} concludes the paper and outlines potential directions for future research.

\textbf{Notation.}
The symbols $\mathbb{R}$, $\mathbb{R}_{+}$, and $\mathbb{Z}_{+}$ denote the sets of real numbers, nonnegative real numbers, and nonnegative integers, respectively.
For a matrix $Q \in \mathbb{R}^{m\times k}$, $\|Q\|_{p}$ denotes the entrywise $\ell_p$-norm, i.e.,
$\|Q\|_{p} = \left(\sum_{j,i} |Q_{ij}|^p\right)^{1/p}$ for $p\in[1,\infty)$, while $\|Q\|_{\infty}=\max_{i,j}|Q_{ij}|$. Moreover,
$\|Q\|_{\mathrm{S}_p}$ denotes the Schatten $p$-norm, and $\|Q\|_{\mathrm{F}}$ denotes the Frobenius norm.
For a linear operator $\mathcal{A}$, $\|\mathcal{A}\|_{\mathrm{op}}$ denotes its operator norm.
$\mathbb{S}_{+}^{n}$ denotes the cone of $n\times n$ symmetric positive semidefinite matrices.
The Hadamard product is denoted by $\odot$.
For a function $f$, $\partial f$ denotes its subdifferential; if $f$ is differentiable, then $\partial f = \{\nabla f\}$. The cardinality of a set $\mathcal{S}$ is indicated by $|\mathcal{S}|$.

\section{Motivating Example: Euclidean-Based Schemes in High Dimensions}\label{sec:moti}
\begin{figure*}[t]
    \centering
    \hspace{-0.4cm}
    \subfloat[Dimension: $10$]{
        \includegraphics[width=0.24\textwidth]{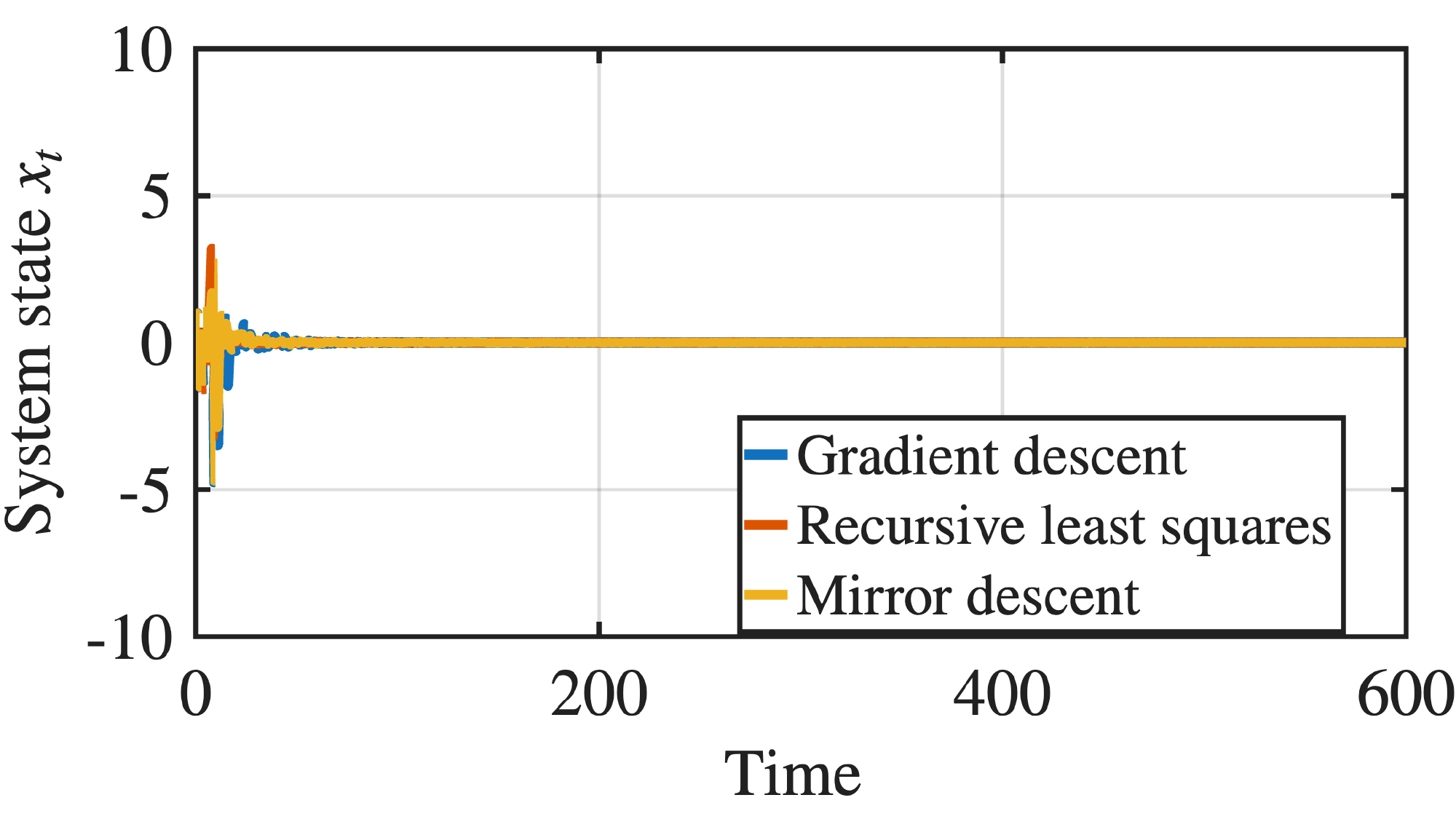}
        \label{fig:par_10}
    } \hspace{-0.4cm}
    \subfloat[Dimension: $50$]{
        \includegraphics[width=0.24\textwidth]{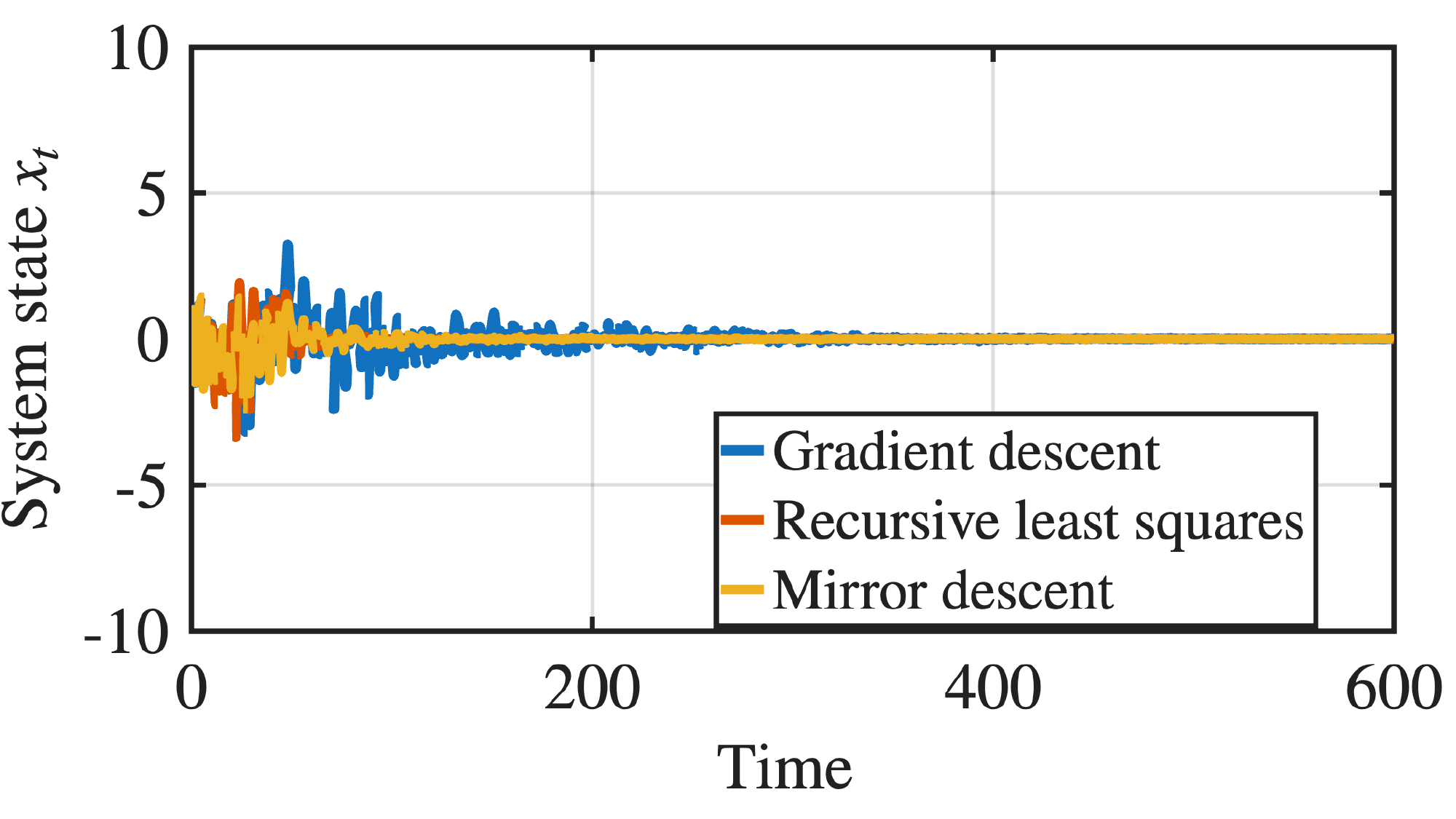}
        \label{fig:par_50}
    } \hspace{-0.4cm}
    \subfloat[Dimension: $500$]{
        \includegraphics[width=0.24\textwidth]{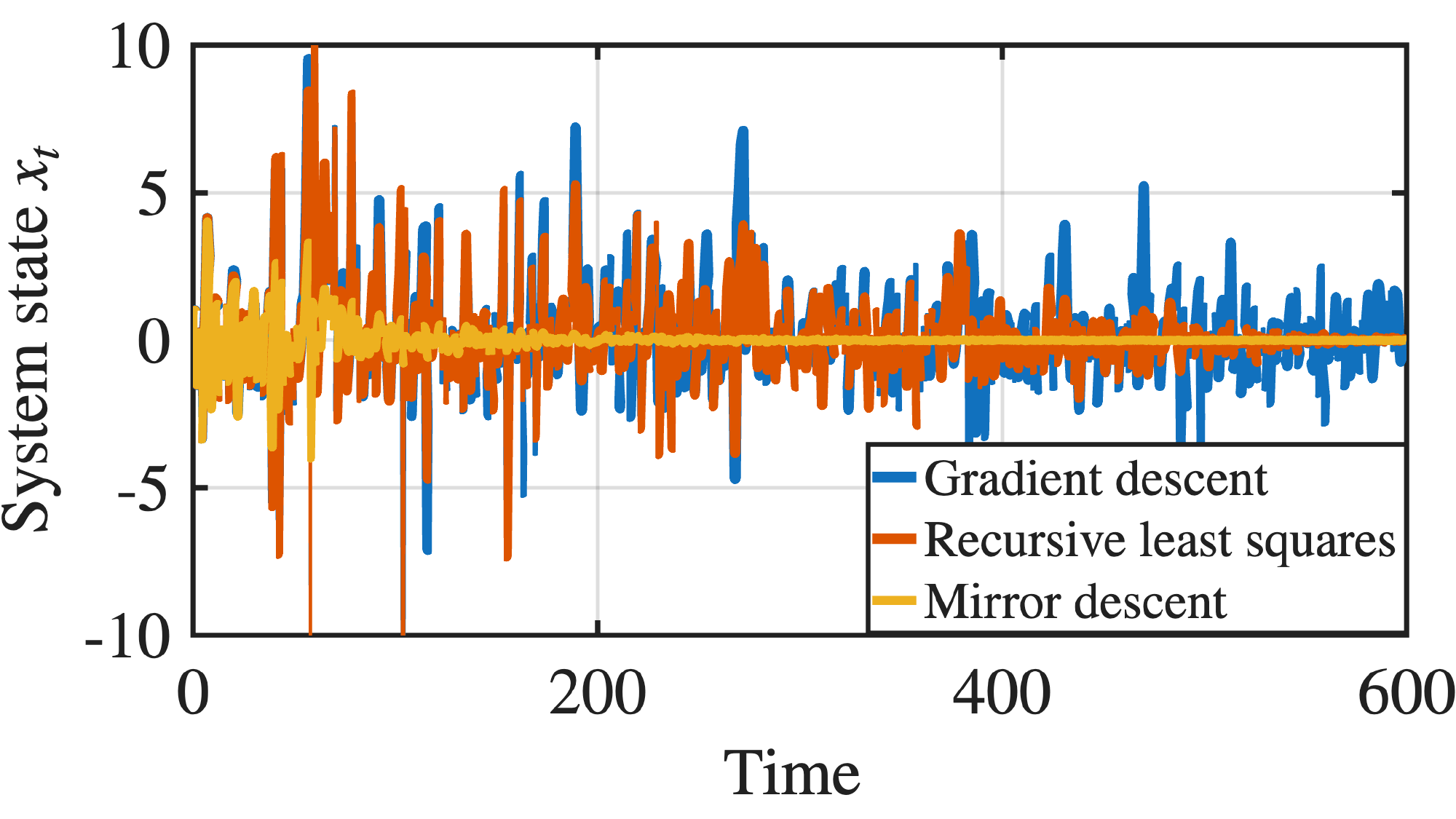}
        \label{fig:par_500}
    } \hspace{-0.4cm}
    \subfloat[Dimension: $2000$]{
        \includegraphics[width=0.24\textwidth]{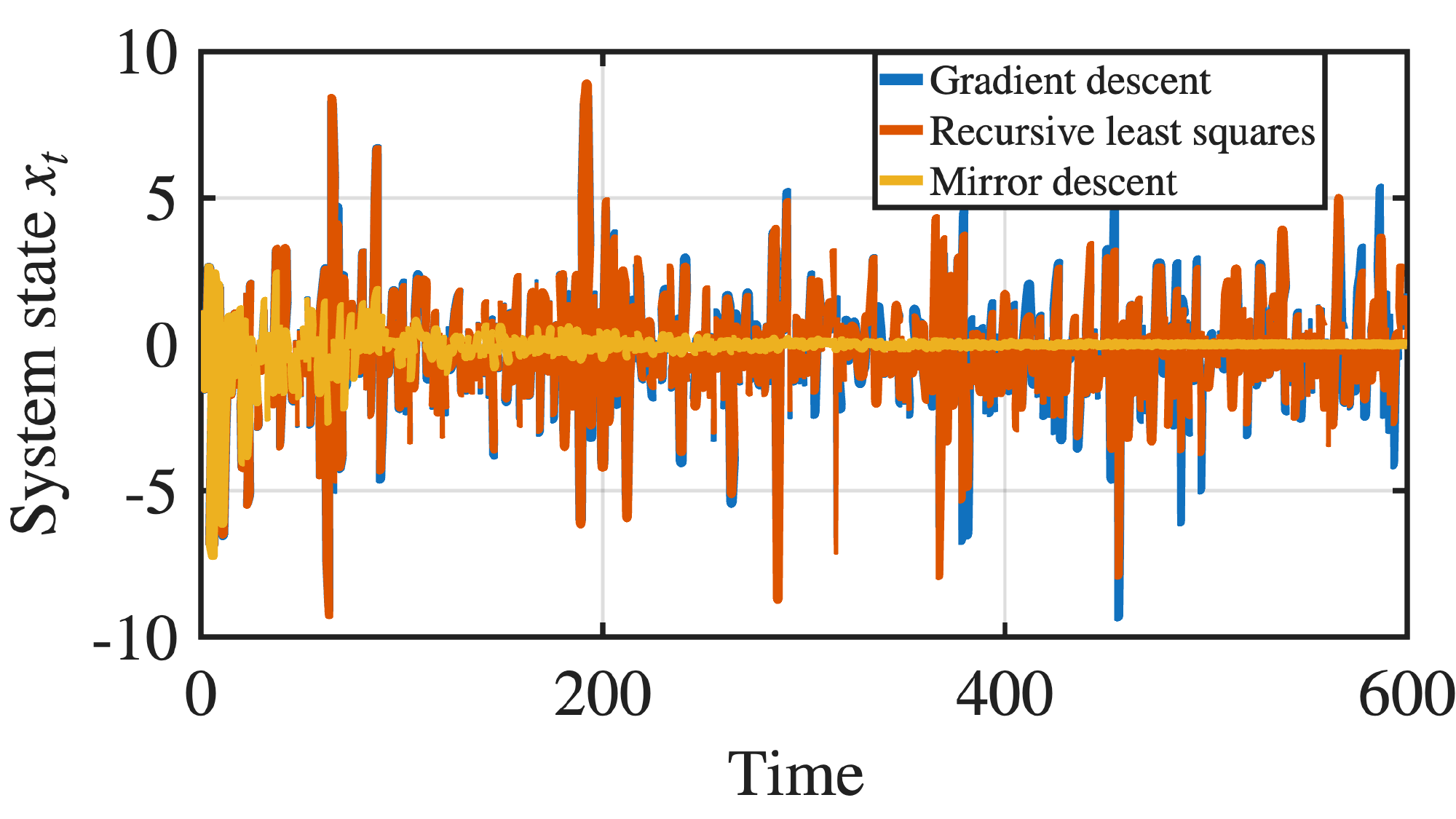}
        \label{fig:par_2000}
    }
    \caption{System state trajectories under control law \eqref{eq:motivating.controller} for varying dimensions $k$. Comparison between gradient descent \eqref{eq:motivating.GD}, recursive least squares \eqref{eq:motivating.RLS}, and sparse mirror descent \eqref{eq:sparse.update}.}
    \label{fig:par_all}
\end{figure*}
In this section, we present a motivating example to illustrate the behavior of commonly used Euclidean-based adaptive update laws, namely, gradient-descent-type and recursive least squares methods, in high-dimensional settings, and to highlight the need for non-Euclidean adaptation. To this end, consider the adaptive stabilization problem for the scalar system
\begin{equation}\label{eq:motivating.system}
    x_{t+1} = \theta^\top \psi(x_t, t) + u_t,
\end{equation}
where $\theta \in \mathbb{R}^k$ is an unknown parameter vector. We assume $\psi(x_t, t)$ is a stochastic regressor vector whose components $\psi_i(x_t, t)$ are independent and identically distributed (i.i.d.) processes defined by:
\begin{equation*}
    \psi_i(x_t, t) = 
    \begin{cases} 
        0.5 x_t + 1, & \text{with probability } 0.5, \\
        -0.5 x_t - 1, & \text{with probability } 0.5.
    \end{cases}
\end{equation*}
We further assume that $\theta$ is sparse, with only the first three components being non-zero: $\theta_1 = \theta_2 = \theta_3 = 1$, and $\theta_i = 0$ for all $i \geq 4$. Consequently, increasing the dimension does not alter the underlying system dynamics, as it merely appends zero-valued parameters. Following the control structure in \eqref{eq:control.law}, we employ the certainty-equivalence control law:
\begin{equation}\label{eq:motivating.controller}
    u(x_t, \widehat{\theta}_t) = -\widehat{\theta}_t^\top \psi(x_t, t).
\end{equation}
The instantaneous cost $J_t(\widehat{\theta}_t)$ defined in \eqref{eq:cost} satisfies
\begin{equation}
    J_t(\widehat{\theta}_t) = \frac{1}{2} \left( (\theta - \widehat{\theta}_t)^\top \psi(x_t, t) \right)^2 = \frac{1}{2} x_{t+1}^2,
\end{equation}
with the gradient given by $\nabla J_t(\widehat{\theta}_t) = -x_{t+1} \psi(x_t, t)$. 

We consider two standard Euclidean update laws for the parameter
estimate $\widehat{\theta}_t$: (normalized) gradient descent and recursive
least squares. Under the Assumption~\ref{ass:linear-growth-regressor}, both update laws guarantee boundedness of the closed-loop signals and asymptotic convergence of $x_t$ to the origin; see, e.g., \cite{goodwin2009adaptive,astrom2008adaptive,landau2011adaptive}.

\textbf{(i) (Normalized) gradient descent.} At time step $t+1$, the estimate $\widehat{\theta}_t$ is updated according to
    \begin{equation}\label{eq:motivating.GD}
        \widehat{\theta}_{t+1} = \widehat{\theta}_t + \frac{\psi(x_t, t) x_{t+1}}{1 + \|\psi(x_t, t)\|_2^2}.
    \end{equation}

 \textbf{(ii) Recursive least squares.}  At time step $t+1$, the parameter estimate $\widehat{\theta}_t$ and the covariance matrix $P_t \in \mathbb{S}^{k }$ are updated according to
 \begin{equation}
    \begin{split}\label{eq:motivating.RLS}
        \widehat{\theta}_{t+1} &= \widehat{\theta}_t + \frac{P_t \psi(x_t, t) x_{t+1}}{1 + \psi(x_t, t)^\top P_t \psi(x_t, t)}, \\
        P_{t+1} &= P_t - \frac{P_t \psi(x_t, t) \psi(x_t, t)^\top P_t}{1 + \psi(x_t, t)^\top P_t \psi(x_t, t)}.
    \end{split}
\end{equation}
   It is worth noting that a key drawback of recursive least squares, relative to normalized gradient descent, is that it requires maintaining and updating a $k\times k$ matrix at each iteration, which may be impractical in high-dimensional problems.

Figure \ref{fig:par_all} illustrates the state trajectories for the closed-loop system using the aforementioned update laws alongside the proposed mirror descent algorithm \eqref{eq:update.law}. The mirror descent scheme utilizes the mirror map  \eqref{sparse.potential}. All algorithms are initialized with $\widehat{\theta}_0 = \mathbf{0}$ and $x_0 = 1$, and simulations are conducted for $k \in \{10, 50, 500, 2000\}$. As observed, although the underlying system dynamics remain unchanged with increasing dimension, the performance of normalized gradient descent and recursive least squares degrades significantly. This behavior is consistent with the regret bound in \eqref{eq:intro.regret.bound}. Intuitively, by propagating estimation errors across all dimensions, Euclidean-based methods fail to isolate the low-dimensional signal. In contrast, mirror descent maintains robust, nearly dimension-invariant performance by exploiting the underlying sparse geometry of the parameter space.

\section{Mirror updates for discrete-time adaptive control}\label{sec:main.result}
This section develops a mirror descent framework for parameter adaptation in adaptive control of systems of the form \eqref{eq.dynamics}. We establish the closed-loop stability and asymptotic convergence properties of the resulting adaptive system. Furthermore, we derive a regret bound to characterize the transient performance of the proposed  scheme.

\subsection{Preliminaries}\label{sec:main.prel}
This subsection reviews preliminary definitions and results from convex analysis that will be used throughout the subsequent development. For a comprehensive treatment, we refer the reader to \cite{rockafellar1970convex, borwein2006convex,bertsekas2003convex, boyd2004convex}.

We consider the vector space of matrices \(\mathbb{R}^{m \times k}\) equipped with the Frobenius inner product
\begin{equation*} \langle X,Y\rangle := \operatorname{Tr}(X^\top Y), \qquad X,Y\in\mathbb{R}^{m\times k}. 
\end{equation*}
Let \(\|\cdot\|\) be a norm on \(\mathbb{R}^{m \times k}\). Its dual norm, denoted by \(\|\cdot\|_*\), is defined for any \(Z \in \mathbb{R}^{m \times k}\) by
\begin{equation*}
    \|Z\|_* := \sup_{X \in \mathbb{R}^{m\times k}}\{ \langle Z, X \rangle : \|X\| \le 1 \}.
\end{equation*}
Let \(f : \mathbb{R}^{m \times k} \to \mathbb{R}\cup\{+\infty\}\) be a proper, lower semicontinuous, convex function, with effective domain
\begin{equation*}
    \operatorname{dom} f := \{ X \in \mathbb{R}^{m \times k} : f(X) < +\infty \}.
\end{equation*}
The \emph{Fenchel conjugate} of \(f\) is defined by
\begin{equation}
    f^*(Z) := \sup_{X \in \operatorname{dom} f}
    \bigl\{ \langle Z, X \rangle - f(X) \bigr\}.
\end{equation}
For \(Q \in \operatorname{dom} f\) such that \(\partial f(Q)\neq\varnothing\), and for a fixed subgradient \(Z \in \partial f(Q)\), we define the generalized \emph{Bregman divergence} associated with \(f\) by
\begin{equation}
    D_f(P,Q;Z)
    :=
    f(P) + f^*(Z) - \langle P, Z \rangle,
    \quad P \in \operatorname{dom} f.
\end{equation}
Since \(Z \in \partial f(Q)\), the Fenchel--Young equality implies that
\begin{equation*}
    f(Q) + f^*(Z) = \langle Q, Z \rangle.
\end{equation*}
Therefore,
$
    D_f(P,Q;Z)
    =
    f(P) - f(Q) - \langle P-Q, Z \rangle.
$
By the Fenchel--Young inequality, \(D_f(P,Q;Z)\ge 0\). Geometrically, \(D_f(P,Q;Z)\) measures the gap between \(f(P)\) and the supporting affine function of \(f\) at \(Q\) induced by subgradient~\(Z\).
A convex function \(f\) is \(\mathcal{L}\)-smooth with respect to \(\|\cdot\|\) if it is differentiable and \begin{equation*} f(P) \leq f(Q) + \langle \nabla f(Q), P-Q \rangle + \frac{\mathcal{L}}{2}\|P-Q\|^2, \qquad \forall P,Q \in \operatorname{dom} f. \end{equation*} A proper convex function \(f\) is \(\mu\)-strongly convex with respect to \(\|\cdot\|\) if \begin{equation*} f(P) \geq f(Q) + \langle Z, P-Q \rangle + \frac{\mu}{2}\|P-Q\|^2, \qquad \forall P,Q \in \operatorname{dom} f,\quad \forall Z \in \partial f(Q). \end{equation*} 
Throughout the paper, we assume that the mirror map \(f\) in the update law \eqref{eq:update.law} is \(\mu\)-strongly convex with respect to \(\|\cdot\|\), but not necessarily smooth. Under this assumption, its Fenchel conjugate \(f^*\) is differentiable and \((1/\mu)\)-smooth with respect to the dual norm \(\|\cdot\|_*\). Moreover, the inverse relation
\begin{equation*}
    \nabla f^* = (\partial f)^{-1}
\end{equation*}
holds, where \(\partial f\) denotes the subdifferential of \(f\). Lastly, for the linear operator \(\mathcal{B}:\mathbb{R}^{m\times n_2}\to \mathbb{R}^{n_1\times n_2}\), its adjoint \(\mathcal{B}^{\top}:\mathbb{R}^{n_1\times n_2}\to \mathbb{R}^{m\times n_2}\) is defined through the identity \begin{equation}\label{eq:adj.ident} \langle \mathcal{B}(U),V\rangle = \langle U,\mathcal{B}^{\top}(V)\rangle, \end{equation} for all \(U\in\mathbb{R}^{m\times n_2}\) and \(V\in\mathbb{R}^{n_1\times n_2}\).

\subsection{Closed-loop stability and finite-time performance}
Before presenting the main stability results, we detail the algorithmic steps of the parameter update law \eqref{eq:update.law.main}. Given an initial estimate $\widehat{\Theta}_0$, we select a compatible dual initialization $Z_0 \in \partial f(\widehat{\Theta}_0)$. The adaptation then evolves in the dual space according to the gradient-descent recursion
$
    Z_{t+1} = Z_t - \eta_{t+1}\nabla J_t(\widehat{\Theta}_t).
$
At each time step, the primal parameter estimate is recovered via the mapping
$
    \widehat{\Theta}_{t+1} = \nabla f^*(Z_{t+1}).
$
As discussed in the previous subsection, since $f$ is $\mu$-strongly convex, its
Fenchel conjugate $f^*$ is differentiable and satisfies
\(
    \nabla f^* = (\partial f)^{-1}.
\)
Consequently, the primal--dual consistency relation
\(
    Z_{t+1}\in \partial f(\widehat{\Theta}_{t+1})
\)
is preserved by construction at each time step.
We emphasize that the use of nondifferentiable mirror maps provides a convenient
mechanism for incorporating constraints on $\Theta$. Specifically, the indicator
function of the feasible set can be included directly in the definition of $f$.
The resulting extended-valued mirror map enforces feasibility through its effective domain while preserving the primal--dual update structure described above; see Section~\ref{sec:simplex}.

The following theorem establishes the stability of
the closed-loop system and the asymptotic convergence of the tracking error to zero.

\begin{theorem}[Stability and convergence]\label{Theorem.stability} Consider the system \eqref{eq.dynamics} under the control law \eqref{eq:control.law} and the parameter update law \eqref{eq:update.law}. Suppose that Assumption~\ref{ass:linear-growth-regressor} holds. Then, the sequences \(\{X_t\}_{t\geq 0}\) and \(\{\widehat{\Theta}_t\}_{t\geq 0}\) are uniformly bounded. Moreover, 
\begin{equation}
   \lim_{t\to\infty} \|X_t-X_t^{\mathrm d}\|_{\mathrm F}=0.   
\end{equation}
\end{theorem}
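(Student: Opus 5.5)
The proof follows the classical discrete-time adaptive control template (key technical lemma style) in three stages. First, a Bregman-divergence Lyapunov argument gives a telescoping inequality. Second, this inequality shows the normalized error is square summable. Third, a growth argument exploiting Schur stability of $\mathcal{A}-\mathcal{B}\mathcal{K}$ gives boundedness and convergence.

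\textbf{Step 1 (Lyapunov decrease).} Set $V_t := D_f(\Theta,\widehat{\Theta}_t;Z_t) = f(\Theta)+f^*(Z_t)-\langle \Theta, Z_t\rangle$, which is nonnegative. Since $f^*$ is $(1/\mu)$-smooth with respect to $\|\cdot\|_*$ and $\nabla f^*(Z_t)=\widehat{\Theta}_t$,
\begin{equation*}
V_{t+1}-V_t \le -\eta_{t+1}\langle \nabla J_t(\widehat{\Theta}_t), \widehat{\Theta}_t-\Theta\rangle + \frac{\eta_{t+1}^2}{2\mu}\|\nabla J_t(\widehat{\Theta}_t)\|_*^2 .
\end{equation*}
By \eqref{eq:cost.gradient} and the adjoint identity \eqref{eq:adj.ident}, $\langle \nabla J_t, \widehat{\Theta}_t-\Theta\rangle = \|\mathcal{B}((\Theta-\widehat{\Theta}_t)\Psi)\|_{\mathrm F}^2 = 2J_t$. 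Substituting the step size \eqref{eq:step-size} gives
\begin{equation*}
V_{t+1}-V_t \le -\frac{2\mu J_t^2}{\epsilon_{t+1}+\|\nabla J_t\|_*^2}\bigl(2 - 1\bigr) \le -\frac{2\mu J_t^2}{\epsilon_{t+1}+\|\nabla J_t\|_*^2}.
\end{equation*}
Hence $V_t$ is nonincreasing, so $V_t\le V_0$. Strong convexity of $f$ gives $V_t\ge \frac{\mu}{2}\|\Theta-\widehat{\Theta}_t\|^2$, so $\widehat{\Theta}_t$ is uniformly bounded. Telescoping then yields
\begin{equation*}
\sum_t \frac{J_t^2}{\epsilon_{t+1}+\|\nabla J_t\|_*^2}<\infty .
\end{equation*}

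\textbf{Step 2 (normalized error).} Equivalence of norms and \eqref{eq:cost.gradient} give $\|\nabla J_t\|_* \le c\,\|\widetilde X_{t+1}\|_{\mathrm F}\,\|\Psi(X_t,t)\|$. Since $J_t=\frac12\|\widetilde X_{t+1}\|_{\mathrm F}^2$ and $\epsilon_{t+1}$ is bounded, this yields $\frac{\|\widetilde X_{t+1}\|_{\mathrm F}^2}{\bar\epsilon+ c^2\|\Psi(X_t,t)\|^2}\to 0$ whenever $\widetilde X_{t+1}\neq 0$; the zero case is trivial.

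\emph{Main obstacle.} A subtlety arises when $\epsilon_{t+1}=0$. Then the bound only gives $\|\widetilde X_{t+1}\|_{\mathrm F}^2/\|\Psi\|^2$ small, rather than $\|\widetilde X_{t+1}\|_{\mathrm F}^2/(1+\|\Psi\|^2)$ small. I would instead use the direct bound $\|\widetilde X_{t+1}\|_{\mathrm F}\le \|\mathcal{B}\|_{\mathrm{op}}\,\|\Theta-\widehat{\Theta}_t\|\,\|\Psi\|$, which is at most a constant times $\|\Psi\|$ by Step 1. Combined with the summability, this gives $\|\widetilde X_{t+1}\|_{\mathrm F}\le \beta_t\,\|\Psi(X_t,t)\|$ with $\beta_t\to 0$. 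This is also why $\epsilon$ may vanish.

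\textbf{Step 3 (key technical lemma).} Let $E_t := X_t - X_t^{\mathrm d}$. Using the control law \eqref{eq:control.law}, $E_{t+1}=(\mathcal{A}-\mathcal{B}\mathcal{K})(E_t)+\widetilde X_{t+1}$. Schur stability gives constants $C$ and $\rho<1$ such that
\begin{equation*}
\|E_t\|\le C\rho^t\|E_0\| + C\sum_{s<t}\rho^{t-1-s}\|\widetilde X_{s+1}\|.
\end{equation*}
The reference $X_t^{\mathrm d}$ is bounded since $U_t^{\mathrm d}$ is bounded. Assumption~\ref{ass:linear-growth-regressor} then gives $\|\Psi(X_s,s)\|\le c_1'+c_2\|E_s\|$, and hence $\|\widetilde X_{s+1}\|\le \beta_s(c_1'+c_2\|E_s\|)$.

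For boundedness, choose $t_0$ with $C c_2\beta_s/(1-\rho)\le 1/2$ for all $s\ge t_0$. Take $M_t := \max_{s\le t}\|E_s\|$ and apply a discrete Gronwall/contradiction argument: for $t\ge t_0$ one obtains $M_t \le \text{const} + \tfrac12 M_t$, so $E_t$ is bounded. Since $\widehat{\Theta}_t$ is bounded by Step 1, $X_t$ and $\Psi(X_t,t)$ are bounded as well. Then $\|\widetilde X_{t+1}\|\le \beta_t\cdot\text{const}\to 0$, and a stable linear system driven by a vanishing input has $E_t\to 0$. This proves $\|X_t-X_t^{\mathrm d}\|_{\mathrm F}\to 0$.
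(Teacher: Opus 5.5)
Your overall strategy works, but the ``Main obstacle'' paragraph of Step~2 gets the difficulty backwards and replaces a correct statement with a false one. If $\epsilon_{t+1}=0$, summability gives $\|\widetilde X_{t+1}\|_{\mathrm F}/\|\Psi(X_t,t)\|\to0$. That is \emph{stronger} than smallness relative to $1+\|\Psi(X_t,t)\|$, so this case is harmless. The delicate case is $\epsilon_{t+1}>0$ with $\|\widetilde X_{t+1}\|_{\mathrm F}$ small. There your claim $\|\widetilde X_{t+1}\|_{\mathrm F}\le\beta_t\|\Psi(X_t,t)\|$ with $\beta_t\to0$ can fail, and the direct bound through boundedness of $\widehat\Theta_t$ does not rescue it.

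Here is a counterexample. Take the scalar system with $\mathcal A=\mathcal K=0$, $\mathcal B=1$, $U^{\mathrm d}_t=0$, $f(\theta)=\theta^2/2$, $\epsilon_{t+1}\equiv1$ and $\Psi(x,t)=2^{-t}$, which satisfies Assumption~\ref{ass:linear-growth-regressor}. Then $e_t:=\theta-\widehat\theta_t$ obeys $e_{t+1}=e_t/(1+e_t^2 16^{-t})$. Hence $e_t$ tends to a nonzero limit whenever $e_0\neq0$, while $|\widetilde x_{t+1}|/|\Psi(x_t,t)|=|e_t|$.

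What Step~3 actually uses is only $\|\widetilde X_{t+1}\|_{\mathrm F}\le\beta_t(1+\|\Psi(X_t,t)\|)$ with $\beta_t\to0$. This is equivalent to your first sentence of Step~2 when $\bar\epsilon>0$, and it does follow from summability. Suppose $\|\widetilde X_{t+1}\|_{\mathrm F}\ge\delta(1+\|\Psi(X_t,t)\|)$ infinitely often. On those indices $\|\widetilde X_{t+1}\|_{\mathrm F}\ge\delta$ and $\|\Psi(X_t,t)\|\le\|\widetilde X_{t+1}\|_{\mathrm F}/\delta$, so, with $\bar\epsilon\ge\sup_t\epsilon_{t+1}$,
\[
\frac{\|\widetilde X_{t+1}\|_{\mathrm F}^4}{\epsilon_{t+1}+\|\nabla J_t(\widehat\Theta_t)\|_*^2}
\ \ge\
\frac{\|\widetilde X_{t+1}\|_{\mathrm F}^4}{\bar\epsilon+c^2\|\widetilde X_{t+1}\|_{\mathrm F}^4/\delta^2}
\ \ge\
\frac{\delta^4}{\bar\epsilon+c^2\delta^2}>0,
\]
which contradicts summability. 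With this substitution, your Step~3 is correct as written, with $c_1'$ replaced by $1+c_1'$.

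Modulo this repair, you close the key-technical-lemma step differently from the paper.

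\textbf{The paper's route.} It never normalizes by $1+\|\Psi\|$. Lemma~\ref{lemma.upper.bound} bounds the whole step-size denominator by $c_1'+c_2'\max_{i\le t}\|\widetilde X_{i+1}\|_{\mathrm F}^4$. Schur stability expresses $\|X_t\|$ through past residuals, linear growth does the rest, and $\epsilon_{t+1}$ is simply absorbed into $c_1'$. Boundedness of the residual itself then comes from a contradiction along record-maximum indices, where the normalized ratio would tend to $1/c_2'>0$. After that, $\widetilde X_{t+1}\to0$ and the vanishing-input argument finish.

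\textbf{Your route.} You push the normalized residual through linear growth to $\|\widetilde X_{s+1}\|_{\mathrm F}\le\beta_s(c_1'+c_2\|E_s\|)$. You then close a small-gain estimate on $\max_{s\le t}\|E_s\|$ via the convolution bound. This works directly with the tracking error and obtains boundedness by a Gronwall-type estimate rather than by contradiction. The price is the normalized-residual reduction, which is exactly where the $\epsilon_{t+1}$ subtlety arose. The paper avoids that reduction because only large residuals matter in its contradiction. Neither argument needs the direct bound through boundedness of $\widehat\Theta_t$.
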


\begin{proof} 
Consider the nonnegative  function
$
    V_t := D_f (\Theta, \widehat{\Theta}_t; Z_t).
$
The increment of $V_t$ is given by
\begin{align*}
\Delta V_{t+1} := V_{t+1} - V_t = f^*(Z_{t+1}) - f^*(Z_t) - \langle \Theta, Z_{t+1} - Z_t \rangle.
\end{align*}
Since  $f^*$ is $(1/\mu)$-smooth with respect to $\|\cdot\|_*$, we have
\begin{equation*}
f^*(Z_{t+1}) \le f^*(Z_t) + \langle \nabla f^*(Z_t), Z_{t+1} - Z_t \rangle + \frac{1}{2\mu}\|Z_{t+1} - Z_t\|_*^2.
\end{equation*}
Substituting this bound into the expression for $\Delta V_{t+1}$ yields
\begin{align*}
\Delta V_{t+1} \le \langle \nabla f^*(Z_t) - \Theta, \, Z_{t+1} - Z_t \rangle + \frac{1}{2\mu}\|Z_{t+1} - Z_t\|_*^2.
\end{align*}
Substituting the update law \eqref{eq:update.law.main} into the inequality above gives
\begin{align} \label{eq:proof.ineq.En}
     \Delta V_{t+1} \leq\eta_{t+1} \langle  \nabla J_t(\widehat{\Theta}_t), \Theta-\widehat{\Theta}_t  \rangle +  \frac{\eta_{t+1}^2}{2\mu} \| \nabla J_t(\widehat{\Theta}_t)\|^2_*
\end{align}
By exploiting the adjoint identity \eqref{eq:adj.ident}, the inner product term can be evaluated as follows:
\begin{align}\label{eq:key.inner.product}
\langle \nabla J_t(\widehat{\Theta}_t), \, \Theta - \widehat{\Theta}_t \rangle 
&= - \langle \mathcal{B}^\top(\widetilde{X}_{t+1}) \Psi(X_t,t)^\top, \, \Theta - \widehat{\Theta}_t \rangle = - \langle \mathcal{B}^\top(\widetilde{X}_{t+1}), \, (\Theta - \widehat{\Theta}_t)\Psi(X_t,t) \rangle \nonumber\\
&= - \langle \widetilde{X}_{t+1}, \, \mathcal{B} \big( (\Theta - \widehat{\Theta}_t) \Psi(X_t,t) \big) \rangle = - \langle \widetilde{X}_{t+1}, \, \widetilde{X}_{t+1} \rangle 
= - \|\widetilde{X}_{t+1}\|_\mathrm{F}^2 .
\end{align}
Substituting this result back into \eqref{eq:proof.ineq.En} yields
\begin{align} \label{eq:proof.ineq.tildex}
\Delta V_{t+1} \le -\eta_{t+1} \|\widetilde{X}_{t+1}\|_\mathrm{F}^2 + \frac{\eta_{t+1}^2}{2\mu} \| \nabla J_t(\widehat{\Theta}_t)\|_*^2.
\end{align}
Substituting the step size \eqref{eq:step-size} into the inequality above yields
\begin{align} \label{eq:proof.ineq.tildex.2}
\Delta V_{t+1} \le -\frac{\mu}{2} \frac{\|\widetilde{X}_{t+1}\|_\mathrm{F}^4}{\epsilon_{t+1} + \| \nabla J_t(\widehat{\Theta}_t)\|_*^2} \le 0.
\end{align}
Equation \eqref{eq:proof.ineq.tildex.2} implies that the nonnegative sequence $\{V_t\}_{t \ge 0}$ is monotonically nonincreasing and converges to a finite asymptotic value. Moreover, since \(f\) is \(\mu\)-strongly convex and \(Z_t\in\partial f(\widehat{\Theta}_t)\), we have
\begin{align*}
    \frac{\mu}{2}\|\Theta-\widehat{\Theta}_t\|^2 \leq V_t \leq V_0.
\end{align*}
Hence, the estimate $\widehat{\Theta}_t$ remains bounded. Since \(V_t\) converges, we also have $\lim_{t \to \infty} \Delta V_{t+1} = 0$. Thus, \eqref{eq:proof.ineq.tildex.2} implies
\begin{equation}\label{eq:limit_zero}
\lim_{t \to \infty} \frac{\|\widetilde{X}_{t+1}\|_\mathrm{F}^4}{\epsilon_{t+1} + \| \nabla J_t(\widehat{\Theta}_t)\|_*^2} = 0.
\end{equation}
The above limit does not, by itself, imply the boundedness or asymptotic convergence of $\widetilde{X}_t$, since the denominator may grow faster than the numerator. We next rule out this possibility under the linear-growth condition \eqref{eq:linear-growth-regressor}. To this end, we first establish an upper bound on the growth rate of the denominator in terms of the residual norm $\|\widetilde{X}_{t+1}\|_{\mathrm{F}}$.

\begin{lemma}\label{lemma.upper.bound} Suppose that $\Psi$ satisfies Assumption~\ref{ass:linear-growth-regressor}. Then there exist constants $c'_1,c'_2>0$ such that \begin{equation}\label{eq:denominator-upper-bound} \epsilon_{t+1} +\|\nabla J_t(\widehat{\Theta}_t)\|_*^2 \leq c'_1+c'_2\max_{0\leq i\leq t} \|\widetilde{X}_{i+1}\|_{\mathrm{F}}^4\qquad , \forall t \in \mathbb{Z}_{+}, . \end{equation} \end{lemma} The proof of Lemma~\ref{lemma.upper.bound} is provided in Appendix~\ref{sec:appendix}. Combining \eqref{eq:denominator-upper-bound} with \eqref{eq:limit_zero}, we obtain 
\begin{equation} 
0 \leq \frac{\|\widetilde{X}_{t+1}\|_{\mathrm{F}}^4} {c'_1+c'_2\displaystyle\max_{0\leq i\leq t} \|\widetilde{X}_{i+1}\|_{\mathrm{F}}^4} \nonumber\leq \frac{\|\widetilde{X}_{t+1}\|_{\mathrm{F}}^4} {\epsilon_{t+1} +\|\nabla J_t(\widehat{\Theta}_t)\|_*^2}. \label{eq:proof.lim} \end{equation} 
Hence, we have \begin{equation}\label{eq:normalized-error-limit} \lim_{t\to\infty} \frac{\|\widetilde{X}_{t+1}\|_{\mathrm{F}}^4} {c'_1+c'_2\displaystyle\max_{0\leq i\leq t} \|\widetilde{X}_{i+1}\|_{\mathrm{F}}^4} =0. \end{equation} Note that if $\{\widetilde{X}_{t+1}\}_{t \in \mathbb{Z}_{+}}$ is bounded, then \eqref{eq:normalized-error-limit} implies that $\|\widetilde{X}_{t+1}\|_{\mathrm{F}}\to 0$, since its denominator is uniformly bounded. We now establish the boundedness of $\{\widetilde{X}_{t+1}\}_{t \in \mathbb{Z}_{+}}$ using a contradiction argument analogous to the key technical lemma in discrete-time adaptive control \cite[Lemma 6.2.1]{goodwin2009adaptive}. Suppose, for contradiction, that the sequence $\{\|\widetilde{X}_{t+1}\|_{\mathrm{F}}\}_{t \in \mathbb{Z}_{+}}$ is unbounded. Then there exists a strictly increasing sequence of indices $\{t_n\}_{n\geq 1}$ such that \begin{equation*} \|\widetilde{X}_{t_n+1}\|_{\mathrm{F}} = \max_{0\leq i\leq t_n} \|\widetilde{X}_{i+1}\|_{\mathrm{F}} \quad\text{and}\quad \|\widetilde{X}_{t_n+1}\|_{\mathrm{F}}\to\infty \end{equation*} as $n\to\infty$. Therefore, along this subsequence

\begin{align*} \lim_{n\to\infty} \frac{\|\widetilde{X}_{t_n+1}\|_{\mathrm{F}}^4} {c'_1+c'_2\displaystyle\max_{0\leq i\leq t_n} \|\widetilde{X}_{i+1}\|_{\mathrm{F}}^4}  = \lim_{n\to\infty} \frac{\|\widetilde{X}_{t_n+1}\|_{\mathrm{F}}^4} {c'_1+c'_2\|\widetilde{X}_{t_n+1}\|_{\mathrm{F}}^4}  = \lim_{n\to\infty} \frac{1} {c'_1/\|\widetilde{X}_{t_n+1}\|_{\mathrm{F}}^4+c'_2} = \frac{1}{c'_2}>0. \end{align*} This contradicts \eqref{eq:normalized-error-limit}. Hence, $\{\widetilde{X}_{t+1}\}_{t \in \mathbb{Z}_{+}}$ is bounded. Consequently, the denominator in \eqref{eq:normalized-error-limit} is uniformly bounded, and therefore  $\widetilde{X}_{t+1} \to 0$ as $t \to \infty$.

Finally, the tracking error dynamics can be expressed as
\begin{equation*}
X_{t+1} - X^{\mathrm{d}}_{t+1} = (\mathcal{A} - \mathcal{B}\mathcal{K})(X_t - X^{\mathrm{d}}_t) + \widetilde{X}_{t+1}.
\end{equation*}
Since $\widetilde{X}_{t+1} \to 0$ as $t \to \infty$ and the closed-loop operator $(\mathcal{A} - \mathcal{B}\mathcal{K})$ is Schur stable, it follows from standard stability theory for linear systems under vanishing inputs (see e.g. \cite{hespanha2018linear}) that $X_t \to X^{\mathrm{d}}_t$ as $t \to \infty$.
\end{proof}

\begin{remark}[Non-Euclidean Polyak step size]\label{remark.stepsize}\normalfont
Since $J_t(\Theta)=0$, the step size in \eqref{eq:step-size} can be written as
\begin{equation*}
    \eta_{t+1}
    =
    \frac{2\mu\,\big(J_t(\widehat{\Theta}_t)-J_t(\Theta)\big)}
    {\epsilon_{t+1}+\|\nabla J_t(\widehat{\Theta}_t)\|_*^2}.
\end{equation*}
In the special case where the regressor is constant, i.e.,
$\Psi=C\in\mathbb{R}^{k\times n_2}$, the loss $J_t$ is time invariant.
Moreover, if the Euclidean geometry is used and $\epsilon_{t+1}=0$, then the
step size reduces to the classical Polyak step size
\cite{polyak1987introduction,hazan2019revisiting}. Therefore,
\eqref{eq:step-size} can be viewed as a non-Euclidean, online generalization of
the Polyak step size. The regularization term $\epsilon_{t+1}$ provides an additional degree of freedom to avoid potential numerical issues, such as division by zero in the presence of noise. A typical choice is $\epsilon_{t+1}=\alpha(t+1)^{-\beta}$ with $\alpha,\beta \in \mathbb{R}_{+}$.
\end{remark}

In the following theorem, we establish a regret bound for the proposed framework to characterize its finite-time (non-asymptotic) performance.

\begin{theorem}[Regret bound]
\label{Theorem.regret}
Consider the system \eqref{eq.dynamics} under the control law
\eqref{eq:control.law} with the parameter update law
\eqref{eq:update.law}. Then, for any horizon $T\geq 1$, the regret satisfies
\small
\begin{equation} \label{eq:regret.bound.theorem3}
       \operatorname{Reg}(T)
     \leq
     \sqrt{
     \frac{D_f(\Theta,\widehat{\Theta}_0;Z_0)}{2\mu}
    \sum_{t=0}^{T-1} \left(\epsilon_{t+1}+
        \| \nabla J_t(\widehat{\Theta}_t)\|_{*}^{2}
     \right)
     },
\end{equation}
\end{theorem}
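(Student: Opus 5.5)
The plan is to reuse the Lyapunov-type inequality \eqref{eq:proof.ineq.tildex.2} from the proof of Theorem~\ref{Theorem.stability}, telescope it, and then convert the resulting weighted sum into the regret via Cauchy--Schwarz.

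\textbf{Step 1: telescoping.} With $V_t = D_f(\Theta,\widehat{\Theta}_t;Z_t)$, inequality \eqref{eq:proof.ineq.tildex.2} gives, since $\|\widetilde{X}_{t+1}\|_\mathrm{F}^2 = 2J_t(\widehat{\Theta}_t)$,
\begin{equation*}
V_{t+1}-V_t \le -\frac{\mu}{2}\,\frac{4J_t(\widehat{\Theta}_t)^2}{a_t} = -2\mu\,\frac{J_t(\widehat{\Theta}_t)^2}{a_t},
\qquad a_t := \epsilon_{t+1}+\|\nabla J_t(\widehat{\Theta}_t)\|_*^2 .
\end{equation*}
Summing over $t=0,\dots,T-1$ and using $V_T\ge 0$ (Fenchel--Young) yields
\begin{equation*}
\sum_{t=0}^{T-1}\frac{J_t(\widehat{\Theta}_t)^2}{a_t} \le \frac{V_0}{2\mu} = \frac{D_f(\Theta,\widehat{\Theta}_0;Z_0)}{2\mu}.
\end{equation*}

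\textbf{Step 2: Cauchy--Schwarz.} We write $J_t = (J_t/\sqrt{a_t})\sqrt{a_t}$, so that
\begin{equation*}
\operatorname{Reg}(T)=\sum_{t=0}^{T-1} J_t(\widehat{\Theta}_t) \le \Big(\sum_{t=0}^{T-1}\frac{J_t(\widehat{\Theta}_t)^2}{a_t}\Big)^{1/2}\Big(\sum_{t=0}^{T-1}a_t\Big)^{1/2}.
\end{equation*}
Combining this with Step 1 gives exactly \eqref{eq:regret.bound.theorem3}.

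\textbf{Main obstacle.} The only delicate point is the degenerate case $a_t=0$. Here $\epsilon_{t+1}=0$ and $\nabla J_t(\widehat{\Theta}_t)=0$; we must show that then $J_t(\widehat{\Theta}_t)=0$, and adopt the convention $0/0:=0$ in the step size, the drift inequality, and the Cauchy--Schwarz sum. This follows from \eqref{eq:key.inner.product}: $\|\widetilde{X}_{t+1}\|_\mathrm{F}^2 = -\langle\nabla J_t(\widehat{\Theta}_t),\Theta-\widehat{\Theta}_t\rangle = 0$, so $J_t(\widehat{\Theta}_t)=0$ and such terms contribute nothing to either side. Note also that no boundedness from Theorem~\ref{Theorem.stability} is needed, since the bound holds for every finite $T$.
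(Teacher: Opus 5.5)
Your proposal is correct and follows the paper's proof essentially step for step: you telescope \eqref{eq:proof.ineq.tildex.2}, using $\|\widetilde{X}_{t+1}\|_{\mathrm F}^2=2J_t(\widehat{\Theta}_t)$, to obtain $\sum_{t} J_t(\widehat{\Theta}_t)^2/a_t\le V_0/(2\mu)$, and then apply Cauchy--Schwarz. Your treatment of the degenerate case $a_t=0$ via \eqref{eq:key.inner.product} adds a bit of rigor that the paper leaves implicit.
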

\normalsize
\begin{proof}
Following the proof of Theorem~\ref{Theorem.stability}, define $ V_t := D_f(\Theta,\widehat{\Theta}_t;Z_t). $ Summing both sides of \eqref{eq:proof.ineq.tildex.2} over \(t=0,\ldots,T-1\) yields
\small
\begin{align}
    \sum_{t=0}^{T-1} \frac{J^2_t(\widehat{\Theta}_t)}{\epsilon_{t+1} + \| \nabla J_t(\widehat{\Theta}_t)\|_{*}^2} = \frac{1}{4}\sum_{t=0}^{T-1}\frac{\|\widetilde{X}_{t+1}\|_\mathrm{F}^4}{\epsilon_{t+1} + \| \nabla J_t(\widehat{\Theta}_t)\|_{*}^2}\leq \frac{1}{2\mu} \sum_{t=0}^{T-1} (V_t - V_{t+1}) = \frac{1}{2\mu} (V_0-V_T)\leq \frac{V_0}{2\mu}. \label{eq:Theorem3.proof.sum}
\end{align}
\normalsize
By applying the Cauchy-Schwarz inequality, we obtain

\begin{align*}
   \operatorname{Reg}(T) = \sum_{t=0}^{T-1} J_t(\widehat{\Theta}_t) \nonumber =& \sum_{t=0}^{T-1} \frac{J_t(\widehat{\Theta}_t)}{\sqrt{\epsilon_{t+1} + \| \nabla J_t(\widehat{\Theta}_t)\|^2_{*}}}\, \sqrt{\epsilon_{t+1} + \| \nabla J_t(\widehat{\Theta}_t)\|^2_{*}} \nonumber \\
    &\leq \sqrt{ \sum_{t=0}^{T-1} \frac{J^2_t(\widehat{\Theta}_t)}{\epsilon_{t+1} + \| \nabla J_t(\widehat{\Theta}_t)\|^2_{*}}}\,\sqrt{ \sum_{t=0}^{T-1} (\epsilon_{t+1} + \| \nabla J_t(\widehat{\Theta}_t)\|^2_{*}) }.
\end{align*}
Substituting the upper bound from \eqref{eq:Theorem3.proof.sum} into the  above inequality results in
\begin{align*}
    \operatorname{Reg}(T) \leq \sqrt{\frac{V_0}{2\mu} \left( \sum_{t=0}^{T-1} (\epsilon_{t+1} + \| \nabla J_t(\widehat{\Theta}_t)\|^2_{*}) \right)},
\end{align*}
which completes the proof.
\end{proof}

\begin{remark}[Interpretation of the regret bound]\normalfont As discussed in the introduction, the choice of the mirror map \(f\) in the update law \eqref{eq:update.law.main} affects the regret bound \eqref{eq:regret.bound.theorem3} through the Bregman divergence \(D_f\), the dual norm \(\|\cdot\|_{*}\), and the strong convexity parameter \(\mu\). Since \(f\) may be normalized, without loss of generality, to be \(1\)-strongly convex, the quantities of primary interest are the Bregman divergence \(D_f\) and the dual norm \(\|\cdot\|_{*}\) associated with the norm relative to which \(f\) is strongly convex. The main idea underlying the dimension-independent regret bounds developed in the next two sections is to choose \(f\) and the underlying norm so that the corresponding dual norm does not introduce explicit dimension-dependent factors. Examples include the \(\ell_\infty\) norm, the spectral norm, and their appropriate analogues. The dimension dependence of the Bregman term \(D_f(\Theta,\widehat{\Theta}_0;Z_0)\) can then be controlled through a suitable initialization, such as zero initialization when exploiting sparse structure.
\end{remark}

\section{Sparse Structure on $\Theta$}\label{sec:sparse}
In this section, we assume a given sparsity structure on the unknown matrix $\Theta \in \Omega= \mathbb{R}^{m \times k}$. We consider three settings: entrywise sparsity, group sparsity, and low-rank structure. In each case, we first specify an appropriate mirror map function and derive the explicit form of the update law \eqref{eq:update.law.main}. Moreover, using the general regret bound established in Theorem~\ref{Theorem.regret}, we derive corresponding bounds for each sparsity structure.
\subsection{Entrywise sparsity.}

We consider the case where the unknown matrix $\Theta \in  \mathbb{R}^{m\times k}$ is entrywise $s$-sparse, i.e., it has at most $s$ nonzero entries. In this setting, we equip $\mathbb{R}^{m\times k}$ with the entrywise $\ell_1$-norm,
$
\|\cdot\| = \|\cdot\|_1,
$
whose dual norm is the entrywise $\ell_\infty$-norm,
$
\|\cdot\|_* = \|\cdot\|_\infty. 
$ Motivated by proximal setups for $\ell_1$-based sparse recovery \cite{nesterov2013first,juditsky2023sparse}, we use the
update law \eqref{eq:update.law} with the mirror map
\begin{equation}\label{sparse.potential}
 f_{\ell_p}(\widehat{\Theta}) = \frac{d^{2-2/p}}{2(p-1)}  \|\widehat{\Theta}\|_p^2,
\qquad
p = 1 + \frac{1}{\ln(d)}.   
\end{equation}
Note that, for $1<p\le 2$, the function $\widehat{\Theta}\mapsto \frac12\|\widehat{\Theta}\|_p^2$ is $(p-1)$-strongly convex with respect to the norm $\|\cdot\|_p$. Moreover, since
$
\|\widehat{\Theta}\|_p\ge d^{\frac1p-1}\|\widehat{\Theta}\|_1,
$ it follows that $f_{\ell_p}$ is $1$-strongly convex with respect to the
entrywise $\ell_1$-norm. The gradient of $f_{\ell_p}$ is given by
\begin{align*}
\nabla f_{\ell_p}(\widehat{\Theta})&=  \frac{d^{2-2/p}}{(p-1)} 
    \|\widehat{\Theta}\|_p^{\,2-p}\,
\operatorname{sign}(\widehat{\Theta})\odot |\widehat{\Theta}|^{\,p-1}.
\end{align*}
Here, $\odot$ denotes the Hadamard product. Let $q$ be the dual exponent of $p$, defined by $ q := p/(p-1)=1+\ln(d)$.  The inverse gradient map is
\begin{align*}
\nabla f^*(Z)= (\nabla f_{\ell_p})^{-1}(Z)&= \frac{p-1}{d^{2-2/p}}
 \|Z\|_{q}^{\,2-q}
\operatorname{sign}(Z)\odot |Z|^{\,q-1}
\end{align*}
As a result, the update law \eqref{eq:update.law} can be written explicitly in the primal space as
\begin{tcolorbox}[ colback=black!2, colframe=black, boxrule=0.6pt, arc=0pt, left=6pt, right=6pt, top=4pt, bottom=4pt ]
\begin{equation} \label{eq:sparse.update} \begin{split} \widehat{\Theta}_{t+1} &= \nabla f^*_{\ell_p} \!\left( \nabla f_{\ell_p}(\widehat{\Theta}_t) -\eta_{t+1}\nabla J_t(\widehat{\Theta}_t) \right), \\ \eta_{t+1} &= \frac{2\,J_t(\widehat{\Theta}_t)} {\epsilon_{t+1} +\|\nabla J_t(\widehat{\Theta}_t)\|_\infty^2}. \end{split} \end{equation}
\end{tcolorbox}
\noindent Applying the general regret bound in
\eqref{eq:regret.bound.theorem3} to the mirror map
\eqref{sparse.potential} yields the following result.
\begin{corollary}[Regret bound under entrywise sparsity] Consider the system \eqref{eq.dynamics} under the control law \eqref{eq:control.law} and the update law \eqref{eq:sparse.update}. Suppose that the parameter estimate is initialized as \(\widehat{\Theta}_0=0\) and that the system parameter matrix \(\Theta\) has at most \(s\) nonzero entries. Then,
\small
\begin{align*}
 \operatorname{Reg}(T)
\le \frac{e}{2}
 s^{\frac{\ln(d)}{1+\ln(d)} } \|\Theta\|_\infty
\sqrt{\ln(d)
 \sum_{t=0}^{T-1} \left(\epsilon_{t+1} + \| \nabla J_t(\widehat{\Theta}_t)\|^2_{\infty}\right)
}.
\end{align*}
\normalsize

\end{corollary}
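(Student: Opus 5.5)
The plan is to specialize Theorem~\ref{Theorem.regret} to the mirror map \eqref{sparse.potential} with $\mu=1$ and the dual norm $\|\cdot\|_*=\|\cdot\|_\infty$. Everything then reduces to bounding the initial Bregman term $D_{f_{\ell_p}}(\Theta,0;Z_0)$ by the intrinsic quantities $s$, $\|\Theta\|_\infty$ and $\ln(d)$.

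\textbf{Step 1 (applicability).} Earlier in this section we showed that $f_{\ell_p}$ is $1$-strongly convex with respect to $\|\cdot\|_1$, whose dual norm is $\|\cdot\|_\infty$. With $\mu=1$ and this dual norm, the step size in \eqref{eq:sparse.update} is exactly \eqref{eq:step-size}. Hence Theorem~\ref{Theorem.regret} applies directly and gives
\begin{equation*}
\operatorname{Reg}(T)\le \sqrt{\tfrac{1}{2}D_{f_{\ell_p}}(\Theta,0;Z_0)\textstyle\sum_{t=0}^{T-1}\big(\epsilon_{t+1}+\|\nabla J_t(\widehat{\Theta}_t)\|_\infty^2\big)}.
\end{equation*}

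\textbf{Step 2 (initial divergence).} Since $p>1$, $f_{\ell_p}$ is differentiable at the origin with $\nabla f_{\ell_p}(0)=0$, because the factor $\|\widehat\Theta\|_p^{2-p}|\widehat\Theta|^{p-1}$ vanishes there. So $Z_0=0$ and $f_{\ell_p}(0)=0$, which gives
\begin{equation*}
D_{f_{\ell_p}}(\Theta,0;0)=f_{\ell_p}(\Theta)=\frac{d^{2-2/p}}{2(p-1)}\|\Theta\|_p^2.
\end{equation*}

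\textbf{Step 3 (exponent bookkeeping, the only real work).} We assume $d\ge 2$ so that $\ln(d)>0$ and $p$ is well defined.
\begin{itemize}
\item Since $p-1=1/\ln(d)$, we have $1/(p-1)=\ln(d)$.
\item The exponent satisfies $2-2/p=2(p-1)/p=2/(1+\ln(d))\le 2/\ln(d)$. Therefore $d^{2-2/p}\le d^{2/\ln(d)}=e^2$.
\item Because $\Theta$ has at most $s$ nonzero entries, $\|\Theta\|_p^2\le s^{2/p}\|\Theta\|_\infty^2$, with $2/p=2\ln(d)/(1+\ln(d))$.
\end{itemize}
Combining these three facts yields
\begin{equation*}
D_{f_{\ell_p}}(\Theta,0;0)\le \frac{e^2\ln(d)}{2}\, s^{\frac{2\ln(d)}{1+\ln(d)}}\|\Theta\|_\infty^2 .
\end{equation*}

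\textbf{Step 4 (conclusion).} Substituting this bound into Step 1 gives $\sqrt{D/2}\le \frac{e}{2}\,s^{\frac{\ln(d)}{1+\ln(d)}}\|\Theta\|_\infty\sqrt{\ln(d)}$. This is exactly the claimed bound.

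No step is a genuine obstacle. The points that need care are the differentiability of $f_{\ell_p}$ at $0$, which justifies $Z_0=0$, and the exponent identity $d^{2/(1+\ln d)}\le e^2$ in Step 3.
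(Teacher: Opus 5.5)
Your proposal is correct and follows the paper's proof step for step. Zero initialization reduces the Bregman term to $f_{\ell_p}(\Theta)$; the bounds $\|\Theta\|_p\le s^{1/p}\|\Theta\|_\infty$, $1/(p-1)=\ln(d)$ and $d^{2-2/p}\le e^2$ are then combined and inserted into Theorem~\ref{Theorem.regret} with $\mu=1$ and $\|\cdot\|_*=\|\cdot\|_\infty$.

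One minor caveat: the $1$-strong convexity you invoke in Step 1 relies on $1<p\le 2$, which needs $\ln(d)\ge 1$, i.e.\ $d\ge 3$ rather than merely $d\ge 2$. For $d=2$ one gets $p>2$, and $\tfrac12\|\cdot\|_p^2$ is then not strongly convex; the paper also leaves this edge case implicit.
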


\begin{proof}
Since \(\widehat{\Theta}_0=0\), we have
\begin{align*}
D_f(\Theta,\widehat{\Theta}_0;Z_0)
=D_f(\Theta,0;0)
=f(\Theta) =\frac{d^{2-2/p}}{2(p-1)}\|\Theta\|_p^2,
\end{align*}
where we used \(f(0)=0\) and \(\nabla f(0)=0\).
Moreover, since \(\Theta\) has at most \(s\) nonzero entries,
\(\|\Theta\|_p\leq s^{1/p}\|\Theta\|_\infty\). Therefore,
\begin{align*}
D_f(\Theta,\widehat{\Theta}_0;Z_0)
\leq
\frac{d^{2-2/p}}{2(p-1)}
s^{2/p}\|\Theta\|_\infty^2 \leq
\frac{e^2\ln(d)}{2}
s^{\frac{2\ln(d)}{1+\ln(d)}}
\|\Theta\|_\infty^2,
\end{align*}
where the second inequality follows by setting
\(p=1+1/\ln(d)\) and noting that
\(d^{2-2/p}\leq e^2\).

Applying the general regret bound in
\eqref{eq:regret.bound.theorem3} with \(\mu=1\) and
\(\|\cdot\|_*=\|\cdot\|_\infty\) yields
\begin{align*}
\operatorname{Reg}(T)
&\leq
\sqrt{
\frac{D_f(\Theta,\widehat{\Theta}_0;Z_0)}{2\mu}
\sum_{t=0}^{T-1}
\left(
\epsilon_{t+1}
+
\|\nabla J_t(\widehat{\Theta}_t)\|_*^2
\right)
} \leq
\frac{e}{2}
s^{\frac{\ln(d)}{1+\ln(d)}}
\|\Theta\|_\infty
\sqrt{
\ln(d)
\sum_{t=0}^{T-1}
\left(
\epsilon_{t+1}
+
\|\nabla J_t(\widehat{\Theta}_t)\|_\infty^2
\right)
},
\end{align*}
which proves the claim.
\end{proof}

\allowdisplaybreaks
\subsection{Group sparsity}

We consider the case where the unknown matrix
\(
\Theta \in \mathbb{R}^{m\times k}
\)
has a \emph{group-sparse} structure.
Specifically, the entries of $\Theta$ are partitioned into $d_{\mathrm{grp}}$
 disjoint groups
\(
\Theta_1,\Theta_2,\dots,\Theta_{d_{\mathrm{grp}}},
\)
and $\Theta$ is said to be \emph{$s$-group sparse} if at most $s$ groups are nonzero, i.e.,
\(
\bigl|\{i : \Theta_i \neq 0\}\bigr| \le s.
\)
In this setting, we equip $\mathbb{R}^{m\times k}$ with the mixed
$\ell_{1}/\ell_{2}$ norm
\(
\|\Theta\|_{1,2}
:=
\sum_{i=1}^{d_{\mathrm{grp}}} \|\Theta_i\|_2,
\)
whose dual norm is
\(
\|Z\|_{\infty,2}
:=
\max_{1\le i\le d_{\mathrm{grp}}}\|Z_i\|_2.
\) We apply the update law \eqref{eq:update.law} with the block mirror map
\begin{equation}\label{eq:group.potential}
f_{p,2}(\widehat{\Theta})
:=
 \frac{d_{\rm grp}^{2-2/p}}{2(p-1)}\|\widehat{\Theta}\|_{p,2}^2,
\qquad
p = 1 + \frac{1}{\ln (d_{\mathrm{grp}}) },
\end{equation}
where
\(
\|\widehat{\Theta}\|_{p,2} := \left( \sum_{i=1}^{d_{\mathrm{grp}}} \|\widehat{\Theta}_{\,i}\|_2^p \right)^{1/p}.
\) Following the same line of reasoning as in the previous subsection, it follows that $f_{p,2}$ is $1$-strongly convex with respect to the norm $\|\cdot\|_{1,2}$. The gradient of $f_{p,2}$ is given blockwise by
\begin{equation*}
\bigl(\nabla f_{p,2}(\widehat{\Theta})\bigr)_i
= \frac{d_{\rm grp}^{2-2/p}}{(p-1)}
\|\widehat{\Theta}\|_{p,2}^{\,2-p}\,
\|\widehat{\Theta}_i\|_2^{\,p-2}\,
\widehat{\Theta}_i,
\;\, i=1,\dots,d_{\mathrm{grp}}.
\end{equation*}
Let $q$ be the dual exponent of $p$. The inverse gradient  map is
\begin{equation*}
\bigl(\nabla f_{p,2}^*(Z)\bigr)_i
= \frac{(p-1)}{d_{\rm grp}^{2-2/p}}
\|Z\|_{q,2}^{\,2-q}\,
\|Z_i\|_2^{\,q-2}\,
Z_i,
\;\;\, i=1,\dots,d_{\mathrm{grp}}.
\end{equation*}
Therefore, the update law \eqref{eq:update.law} can be written equivalently as
\begin{tcolorbox}[ colback=black!2, colframe=black, boxrule=0.6pt, arc=0pt, left=6pt, right=6pt, top=4pt, bottom=4pt ]
\begin{equation} \label{eq:update.group.sparse}
 \begin{split}
 \widehat{\Theta}_{t+1}
 &=
 \nabla f_{p,2}^*\!\left(
 \nabla f_{p,2}(\widehat{\Theta}_t)
 -
 \eta_{t+1} \nabla J_t(\widehat{\Theta}_t)
 \right),\\
 \eta_{t+1} &= \frac{2\,J_t(\widehat{\Theta}_t)} {\epsilon_{t+1} +\|\nabla J_t(\widehat{\Theta}_t)\|_{\infty,2}^2}.
  \end{split}
 \end{equation}
 \end{tcolorbox}
\noindent The following corollary provides a regret bound for the group sparsity structure that depends only logarithmically on $d_{\mathrm{grp}}$.
\begin{corollary}[Regret bound for group sparsity]
Consider the system \eqref{eq.dynamics} under the control law
\eqref{eq:control.law} and the update law \eqref{eq:update.group.sparse}.
Suppose that the update is initialized at $\widehat{\Theta}_0=0$ and the system parameter matrix $\Theta$ is $s$-group sparse.
Then the regret satisfies
\footnotesize
\begin{equation*}
\begin{split}
 \operatorname{Reg}(T)
&\le
\frac{e}{2} s^{\frac{\ln (d_{\mathrm{grp}})}{1+\ln (d_{\mathrm{grp}})}} \|\Theta\|_{\infty,2}\,
\sqrt{
\ln (d_{\mathrm{grp}})  \sum_{t=0}^{T-1} \left(\epsilon_{t+1} + \| \nabla J_t(\widehat{\Theta}_t)\|^2_{\infty,2}\right)
}.
\end{split}
\end{equation*}
\normalsize
\end{corollary}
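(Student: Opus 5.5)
The argument mirrors the proof of the entrywise-sparsity corollary, applying Theorem~\ref{Theorem.regret} with $\mu=1$ and dual norm $\|\cdot\|_{\infty,2}$.

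First I verify that $f_{p,2}$ is $1$-strongly convex with respect to $\|\cdot\|_{1,2}$. The group norm $\|\cdot\|_{p,2}$ is the $\ell_p$ norm of the vector $(\|\widehat{\Theta}_i\|_2)_i\in\mathbb{R}^{d_{\rm grp}}$. It is standard that $\frac12\|\cdot\|_{p,2}^2$ is $(p-1)$-strongly convex with respect to $\|\cdot\|_{p,2}$ for $1<p\le 2$; this is the block analogue of the $\ell_p$ fact already used, and I take it as given, as the paper does ("same line of reasoning"). Hölder's inequality on the vector of group norms gives $\|\widehat{\Theta}\|_{p,2}\ge d_{\rm grp}^{1/p-1}\|\widehat{\Theta}\|_{1,2}$. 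Consequently, the scaling $d_{\rm grp}^{2-2/p}/(p-1)$ makes $f_{p,2}$ $1$-strongly convex in $\|\cdot\|_{1,2}$. The dual of $\|\cdot\|_{1,2}$ is $\|\cdot\|_{\infty,2}$, which matches the step size in \eqref{eq:update.group.sparse}. So Theorem~\ref{Theorem.regret} applies with $\mu=1$.

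Next I evaluate the Bregman term at $\widehat{\Theta}_0=0$. Since $f_{p,2}$ is a positively homogeneous square, $f_{p,2}(0)=0$ and $\nabla f_{p,2}(0)=0$. Hence $Z_0=0$ and
\[
D_f(\Theta,0;0)=f_{p,2}(\Theta)=\frac{d_{\rm grp}^{2-2/p}}{2(p-1)}\|\Theta\|_{p,2}^2 .
\]
At most $s$ groups are nonzero, and each satisfies $\|\Theta_i\|_2\le\|\Theta\|_{\infty,2}$. Therefore $\|\Theta\|_{p,2}\le s^{1/p}\|\Theta\|_{\infty,2}$.

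Then I substitute $p=1+1/\ln(d_{\rm grp})$. This gives $1/(p-1)=\ln(d_{\rm grp})$ and $2/p=2\ln(d_{\rm grp})/(1+\ln(d_{\rm grp}))$. The exponent satisfies $2-2/p=2(p-1)/p\le 2(p-1)=2/\ln(d_{\rm grp})$, so $d_{\rm grp}^{2-2/p}\le e^2$. Combining these,
\[
D_f\le \frac{e^2\ln(d_{\rm grp})}{2}\, s^{\frac{2\ln(d_{\rm grp})}{1+\ln(d_{\rm grp})}}\|\Theta\|_{\infty,2}^2 .
\]

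Finally I insert this into \eqref{eq:regret.bound.theorem3}. The factor $\frac{1}{2\mu}=\frac12$ combines with $\frac{e^2}{2}$ to give $\frac{e^2}{4}$, whose square root is the prefactor $\frac e2$. This yields the stated bound.

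The only nonroutine step is the block strong-convexity claim. If needed, I would justify it by noting that $\frac12\|\cdot\|_{p,2}^2$ is the composition of the $(p-1)$-strongly convex map $\frac12\|\cdot\|_p^2$ on $\mathbb{R}^{d_{\rm grp}}$ with the group-norm map; the standard mixed-norm results of Juditsky and Nemirovski cover this. Everything else is arithmetic.
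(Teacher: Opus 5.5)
Your proposal is correct and matches the paper's proof: zero initialization gives $D_f(\Theta,0;0)=f_{p,2}(\Theta)$, $s$-group sparsity gives $\|\Theta\|_{p,2}\le s^{1/p}\|\Theta\|_{\infty,2}$, the choice $p=1+1/\ln(d_{\mathrm{grp}})$ gives $d_{\mathrm{grp}}^{2-2/p}\le e^2$, and Theorem~\ref{Theorem.regret} with $\mu=1$ and dual norm $\|\cdot\|_{\infty,2}$ finishes. One caveat concerns your optional aside: a plain composition argument does not transfer strong convexity from $\frac12\|\cdot\|_p^2$ to $\frac12\|\cdot\|_{p,2}^2$, because the group-norm map $N(\Theta)=(\|\Theta_i\|_2)_i$ only satisfies $\|N(P)-N(Q)\|_p\le\|P-Q\|_{p,2}$, which is the wrong direction, so you would need the cited Juditsky--Nemirovski result or a direct Hessian computation that also uses the curvature of the block $\ell_2$ norms (the paper simply asserts this fact).
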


\begin{proof} Since the update is initialized at $\widehat{\Theta}_0=0$, we have 
\begin{align*} D_f(\Theta,\widehat{\Theta}_0;Z_0) = f_{p,2}(\Theta) = \frac{d_{\rm grp}^{2-2/p}}{2(p-1)} \|\Theta\|_{p,2}^2 \leq \frac{d_{\rm grp}^{2-2/p}}{2(p-1)} s^{2/p}\|\Theta\|_{\infty,2}^2 \leq \frac{e^2}{2}\ln(d_{\rm grp}) s^{\frac{2\ln(d_{\rm grp})}{1+\ln(d_{\rm grp})}} \|\Theta\|_{\infty,2}^2, \end{align*}
where the first inequality follows from the $s$-group sparsity of $\Theta$. As a result, employing the general regret bound in \eqref{eq:regret.bound.theorem3} with \(\mu=1\) and
\(\|\cdot\|_*=\|\cdot\|_{\infty,2}\) yields
\small
\begin{align*}
 \operatorname{Reg}(T)
\le&
\sqrt{
\frac{D_f(\Theta,\widehat{\Theta}_0;Z_0)}{2 \mu}
\sum_{t=0}^{T-1} \left(\epsilon_{t+1} + \| \nabla J_t(\widehat{\Theta}_t)\|^2_{*}\right)
}
\le
\frac{e}{2} s^{\frac{\ln (d_{\mathrm{grp}})}{1+\ln (d_{\mathrm{grp}})}} \|\Theta\|_{\infty,2}\,
\sqrt{
\ln (d_{\mathrm{grp}})  \sum_{t=0}^{T-1} \left(\epsilon_{t+1} + \| \nabla J_t(\widehat{\Theta}_t)\|^2_{\infty,2}\right)
}.
\end{align*}
\normalsize
which completes the proof.
\end{proof}

\subsection{Low-rank matrix structure.}



In this subsection, we consider the case where the unknown matrix $\Theta\in\mathbb{R}^{m\times k}$ is low rank. Specifically, we assume that
\(
\operatorname{rank}(\Theta)\le s.
\)
Let
\(
    d_{\mathrm{sv}} := \min(m,k),
\)
which is the number of singular values of a matrix in $\mathbb{R}^{m\times k}$.
Our goal is to develop an update law whose regret
depends on the intrinsic rank $s$, rather than  on the
ambient singular-value dimension $d_{\mathrm{sv}}$. To this end, we equip $\mathbb{R}^{m\times k}$ with the nuclear norm
\(
    \|\cdot\| = \|\cdot\|_{\mathrm{S}_1},
\)
whose dual norm is the spectral norm
\(
    \|\cdot\|_* = \|\cdot\|_{\mathrm{S}_\infty}.
\)
Motivated by the sparse-vector construction in the previous subsection, we use
the update law \eqref{eq:update.law} with the Schatten-$p$ mirror map
\begin{equation}\label{lowrank.potential}
    f_{\mathrm{S}_p}(\widehat{\Theta})
    =
    \frac{d_{\mathrm{sv}}^{2-2/p}}{2(p-1)} \|\widehat{\Theta}\|_{\mathrm{S}_p}^2,
    \qquad
    p=1+\frac{1}{\ln(d_{\mathrm{sv}})} .
\end{equation}
The function $ f_{\mathrm{S}_p}$ is $1$-strongly convex with respect to the nuclear norm $\|\cdot\|_{\rm S_1}$.
Let
\(
\widehat{\Theta} = U\Sigma V^\top
\)
be a singular value decomposition (SVD) of $\widehat{\Theta}$. The gradient of
$f_{\mathrm{S}_p}$ is given by
\begin{align*}
   \nabla f_{\mathrm{S}_p}(\widehat{\Theta})
&=\frac{d_{\mathrm{sv}}^{2-2/p}}{(p-1)} \|\widehat{\Theta}\|_{\mathrm{S}_p}^{\,2-p}\; U\Sigma^{p-1}V^\top.
\end{align*}
Let $q$ be the dual exponent of $p$. For any $Z\in\mathbb{R}^{m\times k}$ with SVD
$Z=\bar U\bar\Sigma\bar V^\top$, the inverse gradient map is
\begin{align*}
    \nabla f_{\mathrm{S}_p}^*(Z)=(\nabla f_{\mathrm{S}_p})^{-1}(Z)
&=\frac{(p-1)}{d_{\mathrm{sv}}^{2-2/p}} \|Z\|_{\mathrm{S}_q}^{\,2-q}\;
\bar U \bar\Sigma^{\,q-1}\bar V^\top.
\end{align*}
Consequently, the parameter update \eqref{eq:update.law} can be written as
\begin{tcolorbox}[ colback=black!2, colframe=black, boxrule=0.6pt, arc=0pt, left=6pt, right=6pt, top=4pt, bottom=4pt ]
\begin{equation} \label{eq:update.lowrank.sparse}
 \begin{split}
 \widehat{\Theta}_{t+1}
 &=
 \nabla f_{\mathrm{S}_p}^*\!\left(
 \nabla f_{\mathrm{S}_p}(\widehat{\Theta}_t)
 -
 \eta_{t+1} \nabla J_t(\widehat{\Theta}_t)
 \right),\\
 \eta_{t+1} &= \frac{2\,J_t(\widehat{\Theta}_t)} {\epsilon_{t+1} +\|\nabla J_t(\widehat{\Theta}_t)\|_{\mathrm{S}_\infty}^2}.
  \end{split}
 \end{equation}
 \end{tcolorbox}
\noindent The following corollary provides a regret bound for the low-rank structure that depends only logarithmically on $d_{\mathrm{sv}}$.

\begin{corollary} [Regret bound for low-rank structure] Consider the system \eqref{eq.dynamics} under the control law
\eqref{eq:control.law} and the update law \eqref{eq:update.lowrank.sparse}.
Suppose that the update is initialized at $\widehat{\Theta}_0=0$ and the system parameter matrix $\Theta$ is of rank at most $s$. Then,
\small
\begin{align*}
 \operatorname{Reg}(T)\!
\le \! \frac{e}{2}
 s^{\frac{\ln(d_{\mathrm{sv}})}{1+\ln(d_{\mathrm{sv}})} } \|\Theta\|_{\mathrm{S}_\infty}
\!\sqrt{\ln(d_{\mathrm{sv}})\!
\sum_{t=0}^{T-1}\! \left(\epsilon_{t+1} + \| \nabla J_t(\widehat{\Theta}_t)\|^2_{\mathrm{S}_\infty}\right)
}.
\end{align*}
\normalsize
\end{corollary}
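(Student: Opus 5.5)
The argument follows the entrywise-sparsity corollary, with singular values playing the role of entries. Throughout, $p=1+1/\ln(d_{\mathrm{sv}})$. The plan is to bound the Bregman term in Theorem~\ref{Theorem.regret} and then substitute, using $\mu=1$ and $\|\cdot\|_*=\|\cdot\|_{\mathrm{S}_\infty}$.

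\textbf{Step 1: reduce the Bregman term to $f_{\mathrm{S}_p}(\Theta)$.} Since $\widehat{\Theta}_0=0$, we have $f_{\mathrm{S}_p}(0)=0$. Because $p>1$, the gradient formula gives $\nabla f_{\mathrm{S}_p}(0)=0$. We may therefore take $Z_0=0$, which yields
\begin{equation*}
D_f(\Theta,0;0)=f_{\mathrm{S}_p}(\Theta)=\frac{d_{\mathrm{sv}}^{2-2/p}}{2(p-1)}\|\Theta\|_{\mathrm{S}_p}^2 .
\end{equation*}

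\textbf{Step 2: use the rank bound.} If $\operatorname{rank}(\Theta)\le s$, then $\Theta$ has at most $s$ nonzero singular values, each at most $\|\Theta\|_{\mathrm{S}_\infty}$. Hence $\|\Theta\|_{\mathrm{S}_p}\le s^{1/p}\|\Theta\|_{\mathrm{S}_\infty}$.

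\textbf{Step 3: evaluate the constants.} With the chosen $p$, we have $1/(p-1)=\ln(d_{\mathrm{sv}})$ and $2/p=2\ln(d_{\mathrm{sv}})/(1+\ln(d_{\mathrm{sv}}))$. Moreover,
\begin{equation*}
2-\frac{2}{p}=\frac{2(p-1)}{p}\le 2(p-1)=\frac{2}{\ln(d_{\mathrm{sv}})},
\end{equation*}
so $d_{\mathrm{sv}}^{2-2/p}\le e^{2}$. Combining these,
\begin{equation*}
D_f(\Theta,\widehat{\Theta}_0;Z_0)\le \frac{e^2\ln(d_{\mathrm{sv}})}{2}\, s^{\frac{2\ln(d_{\mathrm{sv}})}{1+\ln(d_{\mathrm{sv}})}}\|\Theta\|_{\mathrm{S}_\infty}^2 .
\end{equation*}

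\textbf{Step 4: substitute into the regret bound.} Plugging this into \eqref{eq:regret.bound.theorem3} with $\mu=1$ and $\|\cdot\|_*=\|\cdot\|_{\mathrm{S}_\infty}$, and taking the square root of $D_f/2$, gives the prefactor $\tfrac{e}{2}\,s^{\ln(d_{\mathrm{sv}})/(1+\ln(d_{\mathrm{sv}}))}\|\Theta\|_{\mathrm{S}_\infty}\sqrt{\ln(d_{\mathrm{sv}})}$. This is exactly the stated bound.

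\textbf{Main obstacle.} The only nontrivial input is the paper's claim that $f_{\mathrm{S}_p}$ is $1$-strongly convex with respect to the nuclear norm, which is what licenses $\mu=1$. I would take it as given, since it is stated earlier. If justification were needed, I would cite the matrix analogue of the $\ell_p$ fact: $\tfrac12\|\cdot\|_{\mathrm{S}_p}^2$ is $(p-1)$-strongly convex with respect to $\|\cdot\|_{\mathrm{S}_p}$ for $1<p\le2$, by the uniform convexity of Schatten classes (Ball--Carlen--Lieb). Combining this with $\|X\|_{\mathrm{S}_p}\ge d_{\mathrm{sv}}^{1/p-1}\|X\|_{\mathrm{S}_1}$, which follows from Hölder's inequality on the singular values, gives the claim.
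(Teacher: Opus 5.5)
Your proposal is correct and follows the paper's proof essentially step for step. You reduce $D_f(\Theta,0;0)$ to $f_{\mathrm{S}_p}(\Theta)$, bound $d_{\mathrm{sv}}^{2-2/p}\le e^2$ with $1/(p-1)=\ln(d_{\mathrm{sv}})$, use $\|\Theta\|_{\mathrm{S}_p}\le s^{1/p}\|\Theta\|_{\mathrm{S}_\infty}$, and substitute into Theorem~\ref{Theorem.regret} with $\mu=1$; your justification of $1$-strong convexity via Ball--Carlen--Lieb and H\"older on the singular values is a valid supplement to what the paper only asserts.
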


\begin{proof}
Since the update is initialized at $\widehat{\Theta}_0=0$, we obtain
\begin{align*}
D_f(\Theta,\widehat{\Theta}_0;Z_0)=f(\Theta)&=\frac{d_{\mathrm{sv}}^{2-2/p}}{2(p-1)}\|\Theta\|_{\mathrm{S}_p}^2\leq \frac{e^2}{2}\ln(d_{\mathrm{sv}}) \|\Theta\|_{\mathrm{S}_p}^2
\end{align*}
Moreover, since $\Theta$ is of rank at most $s$, we find that
\(
\|\Theta\|_{\mathrm{S}_p}\le s^{1/p}\|\Theta\|_{\mathrm{S}_\infty},
\)
hence
\begin{align*}
D_f(\Theta,\widehat{\Theta}_0;Z_0)
\le
\frac{e^2}{2}\ln(d_{\mathrm{sv}}) s^{2/p}\|\Theta\|_{\rm S_\infty}^2=\frac{e^2}{2}\ln(d_{\mathrm{sv}}) s^{\frac{2\ln(  d_{\mathrm{sv}})}{1+\ln(  d_{\mathrm{sv}})} }\|\Theta\|_{\rm S_\infty}^2.
\end{align*}
As a result, employing the general regret bound in \eqref{eq:regret.bound.theorem3} with \(\mu=1\) and
\(\|\cdot\|_*=\|\cdot\|_{\rm S_{\infty}}\) yields
\small
\begin{align*}
 \operatorname{Reg}&(T)
\le
\sqrt{
\frac{D_f(\Theta,\widehat{\Theta}_0;Z_0)}{2\mu}
\sum_{t=0}^{T-1} \left(\epsilon_{t+1} + \| \nabla J_t(\widehat{\Theta}_t)\|^2_{*}\right)
}
\le \frac{e}{2}
\|\Theta\|_{\mathrm{S}_\infty}
s^{\frac{\ln(  d_{\mathrm{sv}})}{1+\ln(  d_{\mathrm{sv}})} }
\sqrt{\ln(  d_{\mathrm{sv}})
\sum_{t=0}^{T-1} \left(\epsilon_{t+1} + \| \nabla J_t(\widehat{\Theta}_t)\|^2_{\mathrm{S}_\infty}\right)
},
\end{align*}
as claimed.
\normalsize
\end{proof}
\section{Simplex-Type Sets and Entropic Potentials}\label{sec:simplex}

In this section, we consider the case in which the unknown parameter set $\Omega \subset \mathbb{R}^{m \times k} $
has a simplex-type structure. Specifically, we study two representative examples: the standard probability simplex and the spectraplex, that is, the set of density matrices. For each setting, we identify a suitable mirror map and derive the explicit form of the update law \eqref{eq:update.law}. We then apply the general regret bound of Theorem~\ref{Theorem.regret} to obtain the corresponding regret guarantees and show that they are essentially dimension-free.

\subsection{Simplex and Shannon Entropy}

We consider the case where the unknown matrix $\Theta$ belongs to the
entrywise probability simplex
\begin{equation}\label{eq:EntrywiseSimplex}
\Omega_{\mathrm e}
:=
\left\{
\widehat{\Theta} \in \mathbb{R}_{+}^{m\times k}
\;:\;
\|\widehat{\Theta}\|_1 = 1
\right\}.
\end{equation}
In this setting, we equip $\mathbb{R}^{m\times k}$ with the entrywise
$\ell_1$-norm, $\|\cdot\|=\|\cdot\|_1$, whose dual norm is the entrywise
$\ell_\infty$-norm, $\|\cdot\|_*=\|\cdot\|_\infty$. For the update law \eqref{eq:update.law}, we employ the Shannon entropy plus the indicator function of $\Omega_{\mathrm e}$ as the mirror map
\begin{equation}\label{eq:shannon.potential}
f_{\mathrm{Sh}}(\widehat{\Theta})
=
\sum_{i,j}
\bigl(\widehat{\Theta}_{ij}\ln (\widehat{\Theta}_{ij})-\widehat{\Theta}_{ij}\bigr)
+
\iota_{\Omega_{\mathrm e}}(\widehat{\Theta}),\nonumber
\end{equation}
where $\iota_{\Omega_{\mathrm e}}$ denotes the indicator function of
$\Omega_{\mathrm e}$, i.e.,
\begin{equation*}
\iota_{\Omega_{\mathrm e}}(\widehat{\Theta})
=
\begin{cases}
0, & \widehat{\Theta}\in\Omega_{\mathrm e},\\
+\infty, & \text{otherwise}.
\end{cases}
\end{equation*}
By Pinsker's inequality, $f_{\mathrm{Sh}}$ is $1$-strongly convex on
$\Omega_{\mathrm e}$ with respect to the $\ell_1$-norm
\cite{nemirovski2009robust,Nemirovski2004MirrorProx}. Moreover, for any
$\widehat{\Theta}\in
 \operatorname{ri}(\Omega_{\mathrm e})
 =
 \left\{
 \widehat{\Theta}_{ij}>0
 \;:\;
 \|\widehat{\Theta}\|_1 = 1
 \right\},$ its subdifferential
admits the characterization
\begin{equation}\label{eq:entropy.subdiff}
\partial f_{\mathrm{Sh}}(\widehat{\Theta})
=
\left\{
\ln_{\mathrm{el}}(\widehat{\Theta}) + \lambda \mathbf{1}
:\;
\lambda \in \mathbb{R}
\right\},
\end{equation}
where $\mathbf{1}$ denotes the all-ones matrix and $\ln_{\mathrm{el}}(\cdot)$ denotes the entrywise logarithm. The conjugate of $f_{\mathrm{Sh}}$ is given by
\begin{equation}\label{eq:entropy.conjugate}
f_{\mathrm{Sh}}^*(Z)
=
\ln\!\left(\sum_{i,j}\exp(Z_{ij})\right)+1,\nonumber
\end{equation}
which is everywhere differentiable, with gradient
\begin{equation}\label{eq:softmax.gradient}
\nabla f_{\mathrm{Sh}}^*(Z)
=
\frac{\exp_{\mathrm{el}}(Z)}{\sum_{i,j}\exp(Z_{ij})},\nonumber
\end{equation}
where $\exp_{\mathrm{el}}(\cdot)$ denotes the entrywise exponential. 

Applying the update law \eqref{eq:update.law}, any choice of
$Z_t\in\partial f_{\mathrm{Sh}}(\widehat{\Theta}_t)$ can be written as
\(
Z_t
=
\ln_{\mathrm{el}}(\widehat{\Theta}_t) + \lambda_t \mathbf{1}
\)
for some $\lambda_t\in\mathbb{R}$.
The dual update \eqref{eq:update.law.main} thus yields
\begin{equation*}
Z_{t+1}
=
\ln_{\mathrm{el}}(\widehat{\Theta}_t)
-
\eta_{t+1}  \nabla J_t(\widehat{\Theta}_t)
+
\lambda_t \mathbf{1}.
\end{equation*}
Applying the primal recovery $\widehat{\Theta}_{t+1}=\nabla f_{\mathrm{Sh}}^*(Z_{t+1})$ yields the following parameter update law
\begin{tcolorbox}[ colback=black!2, colframe=black, boxrule=0.6pt, arc=0pt, left=6pt, right=6pt, top=4pt, bottom=4pt ]
\begin{equation} \label{eq:shanon.update}
\begin{split}
\widehat{\Theta}_{t+1}
&=
\frac{
\widehat{\Theta}_t \odot \exp_{\mathrm{el}}(-\eta_{t+1} \nabla J_t(\widehat{\Theta}_t))
}{
\bigl\|\widehat{\Theta}_t \odot \exp_{\mathrm{el}}(-\eta_{t+1} \nabla J_t(\widehat{\Theta}_t))\bigr\|_1
},\\
\eta_{t+1} &= \frac{2\,J_t(\widehat{\Theta}_t)} {\epsilon_{t+1} +\|\nabla J_t(\widehat{\Theta}_t)\|_\infty^2}.
\end{split}
\end{equation}
\end{tcolorbox}
\noindent where $\odot$ denotes the Hadamard product.  Note that the above update law is independent of the scalar $\lambda_t$. The following corollary provides the resulting regret bound.
\begin{corollary} [Regret bound over the simplex]
Consider the system \eqref{eq.dynamics} under the control law
\eqref{eq:control.law} and the parameter update law
\eqref{eq:shanon.update}. Suppose that the update is initialized at
$\widehat{\Theta}_0 = \mathbf{1}/d$ and that the system parameter matrix belongs
to the simplex $\Omega_{\mathrm{e}}$ defined in \eqref{eq:EntrywiseSimplex}. Then, the regret satisfies
\begin{align}\label{eq:corollary.simplex}
 \operatorname{Reg}(T)&
\le 
\sqrt{\frac{\ln(d)}{2}
 \sum_{t=0}^{T-1} \left(\epsilon_{t+1} + \| \nabla J_t(\widehat{\Theta}_t)\|^2_{\infty}\right)
}.
\end{align}
\end{corollary}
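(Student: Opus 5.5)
The plan is to apply the general regret bound of Theorem~\ref{Theorem.regret} with $\mu=1$ and $\|\cdot\|_*=\|\cdot\|_\infty$. The whole argument then reduces to showing that
\[
D_f(\Theta,\widehat{\Theta}_0;Z_0)\le \ln(d)
\qquad\text{for every } \Theta\in\Omega_{\mathrm e}.
\]
Indeed, once this holds, $\frac{D_f}{2\mu}\le \frac{\ln(d)}{2}$, and substituting into \eqref{eq:regret.bound.theorem3} gives exactly \eqref{eq:corollary.simplex}.

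Two preliminary facts are needed to apply Theorem~\ref{Theorem.regret}.
\begin{itemize}
\item The mirror map $f_{\mathrm{Sh}}$ is $1$-strongly convex on $\Omega_{\mathrm e}$ with respect to $\|\cdot\|_1$. This is Pinsker's inequality, as already stated in this section.
\item The explicit recursion \eqref{eq:shanon.update} coincides with \eqref{eq:update.law}. This is the softmax computation preceding the corollary, which showed that the iterate is independent of $\lambda_t$.
\end{itemize}

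Now fix the initialization $\widehat{\Theta}_0=\mathbf{1}/d$. This point lies in $\operatorname{ri}(\Omega_{\mathrm e})$, so by \eqref{eq:entropy.subdiff} any admissible $Z_0$ has the form
\[
Z_0=\ln_{\mathrm{el}}(\mathbf{1}/d)+\lambda_0\mathbf{1}=(\lambda_0-\ln d)\mathbf{1}.
\]
I will use the representation $D_f(P,Q;Z)=f(P)-f(Q)-\langle P-Q,Z\rangle$. Since $Z_0$ is a multiple of $\mathbf{1}$ and both $\Theta$ and $\widehat{\Theta}_0$ have unit entry sum, the inner-product term vanishes: $\langle \Theta-\widehat{\Theta}_0,\mathbf{1}\rangle=1-1=0$. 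So the divergence does not depend on the choice of $\lambda_0$.

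What remains is $D_f=f_{\mathrm{Sh}}(\Theta)-f_{\mathrm{Sh}}(\mathbf{1}/d)$. The linear terms $-\sum\Theta_{ij}$ and $-\sum(\widehat{\Theta}_0)_{ij}$ both equal $-1$ and cancel. Hence
\[
D_f=\sum_{i,j}\Theta_{ij}\ln\Theta_{ij}-\sum_{i,j}\tfrac1d\ln\tfrac1d=\sum_{i,j}\Theta_{ij}\ln\Theta_{ij}+\ln d,
\]
using the convention $0\ln 0=0$. This is the Kullback--Leibler divergence $\mathrm{KL}(\Theta\,\|\,\mathbf{1}/d)$. Since $\Theta_{ij}\in[0,1]$, every term $\Theta_{ij}\ln\Theta_{ij}$ is nonpositive, so $D_f\le\ln d$, which is the required bound.

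The only delicate point is that $\Theta$ may lie on the boundary of the simplex, with zero entries. There $f_{\mathrm{Sh}}$ is still finite by lower semicontinuity and the convention $0\ln 0=0$. The Bregman divergence only requires a subgradient at the second argument $\widehat{\Theta}_0$, which is interior, so the definition applies without any condition at $\Theta$. No other obstacle is expected.
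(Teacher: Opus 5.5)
Your proposal is correct and follows the paper's proof essentially step for step. You use the subgradient characterization at the interior point $\mathbf{1}/d$ to write $Z_0$ as a multiple of $\mathbf{1}$, note that the inner-product term vanishes on the simplex, bound $\sum_{i,j}\Theta_{ij}\ln\Theta_{ij}+\ln d\le\ln d$, and then apply Theorem~\ref{Theorem.regret} with $\mu=1$ and $\|\cdot\|_*=\|\cdot\|_\infty$.
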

\begin{proof}
Let \(Z_0\in\partial f_{\mathrm{Sh}}(\widehat\Theta_0)\). Since
\(\widehat\Theta_0=\mathbf 1/d\in\operatorname{ri}(\Omega_{\mathrm{e}})\), by
\eqref{eq:entropy.subdiff} we may write
\(Z_0=\ln_{\mathrm{el}}(\widehat\Theta_0)+\lambda_0\mathbf 1
= -\ln(d)\mathbf 1+\lambda_0\mathbf 1\) for some
\(\lambda_0\in\mathbb R\). Hence \(Z_0\) is a constant multiple of
\(\mathbf 1\). Since \(\Theta,\widehat\Theta_0\in\Omega_{\mathrm{e}}\), we have
\(\langle \Theta-\widehat\Theta_0,\mathbf 1\rangle=0\), and therefore
\(\langle \Theta-\widehat\Theta_0,Z_0\rangle=0\). It follows from the
definition of the generalized Bregman divergence that
\begin{align}
D_{f_{\mathrm{Sh}}}(\Theta,\widehat\Theta_0;Z_0)
=
f_{\mathrm{Sh}}(\Theta)-f_{\mathrm{Sh}}(\widehat\Theta_0)
-\langle \Theta-\widehat\Theta_0,Z_0\rangle &=
f_{\mathrm{Sh}}(\Theta)-f_{\mathrm{Sh}}(\widehat\Theta_0) \nonumber\\
&=
\sum_{i,j}\Theta_{ij}\ln(\Theta_{ij})+\ln(d) \le \ln(d),\label{eq:shanoon.proof.ineq}
\end{align}
where the last inequality follows from the convention \(0\ln 0=0\) and
the fact that \(x\ln x\le 0,\, \forall x\in[0,1]\).
Consequently, applying the general regret bound in
\eqref{eq:regret.bound.theorem3} with \(\mu=1\) and
\(\|\cdot\|_*=\|\cdot\|_\infty\), and using
\eqref{eq:shanoon.proof.ineq}, yields 
\begin{align*}
 \operatorname{Reg}(T)
&\le
\sqrt{
\frac{D_f(\Theta,\widehat{\Theta}_0;Z_0)}{2\mu}
\sum_{t=0}^{T-1} \left(\epsilon_{t+1} + \| \nabla J_t(\widehat{\Theta}_t)\|^2_{*}\right)
}\le \sqrt{\frac{\ln(d)}{2}
 \sum_{t=0}^{T-1} \left(\epsilon_{t+1} + \| \nabla J_t(\widehat{\Theta}_t)\|^2_{\infty}\right)}.
\end{align*}
which proves the claim.
\end{proof}
\begin{remark}[Row-stochastic simplex]\normalfont
The same construction applies when \(\Theta\) is row-stochastic, i.e.,
\begin{equation}\label{eq:RowStochasticSimplex}
\Omega_{\mathrm r}
:=
\left\{
\widehat{\Theta}\in\mathbb{R}_{+}^{m\times k}
:\;
\sum_{j=1}^{k}\widehat{\Theta}_{ij}=1,\ i=1,\ldots,m
\right\}.
\end{equation}
In this case, we use
\begin{equation}\label{eq:row.shannon.potential}
f_{\mathrm{Sh},r}(\widehat{\Theta})
=
\sum_{i=1}^{m}\sum_{j=1}^{k}
\left(\widehat{\Theta}_{ij}\ln(\widehat{\Theta}_{ij})-\widehat{\Theta}_{ij}\right)
+
\iota_{\Omega_{\mathrm r}}(\widehat{\Theta}).\nonumber
\end{equation}
For any \(\Theta,\Theta'\in\Omega_{\mathrm r}\) and
\(Z'\in\partial f_{\mathrm{Sh},r}(\Theta')\), the Bregman
divergence reduces to the sum of row-wise KL divergences,
\[
D_{f_{\mathrm{Sh},r}}(\Theta,\Theta';Z')
=
\sum_{i=1}^{m}
D_{\mathrm{KL}}(\Theta_{i,:},\Theta'_{i,:})
\ge
\frac{1}{2m}\|\Theta-\Theta'\|_1^2,
\]
where the inequality follows from Pinsker's and Cauchy's inequalities.
Thus, \(f_{\mathrm{Sh},r}\) is \(1/m\)-strongly convex on
\(\Omega_{\mathrm r}\) with respect to \(\|\cdot\|_1\). The update becomes the row-wise normalized multiplicative rule
\begin{tcolorbox}[ colback=black!2, colframe=black, boxrule=0.6pt, arc=0pt, left=6pt, right=6pt, top=4pt, bottom=4pt ]
\begin{equation} \label{update.row.entropic}
\begin{split}
\widehat{\Theta}_{t+1}
&=
\mathcal{N}_{\mathrm r}
\left(
\widehat{\Theta}_{t}\odot
\exp_{\mathrm{el}}\left(-\eta_{t+1}\nabla J_t(\widehat{\Theta}_t)\right)
\right),\\
\eta_{t+1} &=\frac{2}{m} \frac{\,J_t(\widehat{\Theta}_t)} {\epsilon_{t+1} +\|\nabla J_t(\widehat{\Theta}_t)\|_\infty^2},
\end{split}
\end{equation}    
\end{tcolorbox}
\noindent where \(\mathcal{N}_{\mathrm r}\) denotes row-wise normalization. If
\(\widehat{\Theta}_0=\mathbf{1}_{m\times k}/k\), then, for any
\(Z_0\in\partial f_{\mathrm{Sh},r}(\widehat{\Theta}_0)\),
\begin{equation*}
D_{f_{\mathrm{Sh},r}}(\Theta,\widehat{\Theta}_0;Z_0)
=
\sum_{i=1}^{m}\sum_{j=1}^{k}\Theta_{ij}\ln(k\Theta_{ij})
\le m\ln (k).
\end{equation*}
Theorem~\ref{Theorem.regret}, with \(\mu=1/m\) and
\(\|\cdot\|_*=\|\cdot\|_\infty\), gives
\begin{equation}\label{eq:row.stochastic.regret.bound}
\operatorname{Reg}(T)
\le
\sqrt{
\frac{m^2\ln (k)}{2}
\sum_{t=0}^{T-1}
\left(
\epsilon_{t+1}+
\| \nabla J_t(\widehat{\Theta}_t)\|_\infty^2
\right)
}.\nonumber
\end{equation}
\end{remark}

\subsection{Spectraplex and von Neumann Entropy}
We consider the case where the unknown matrix $\Theta$ belongs to the
spectraplex (the set of density matrices), i.e.,
\begin{equation}\label{eq:Spectrahedron}
\Omega_{\mathrm s}
:=
\left\{
\widehat{\Theta} \in \mathbb{S}_+^{k}
\;:\;
\Tr(\widehat{\Theta})=1
\right\}.
\end{equation}
In this subsection, the parameter matrix is square, i.e., $m=k$.
In this setting, we equip $\mathbb{S}^{k}$ with the nuclear norm
$\|\cdot\|_{\mathrm{S}_1}$, whose dual norm is the operator norm $\|\cdot\|_{\mathrm{S}_\infty}$.
On $\Omega_{\mathrm s}$, we employ the update law \eqref{eq:update.law}
with the von Neumann entropy mirror map
\begin{equation}\label{eq:vn.potential}
f_{\mathrm{vN}}(\widehat{\Theta})
:=
\Tr\bigl(\widehat{\Theta} \ln (\widehat{\Theta}) - \widehat{\Theta}\bigr)
+
\iota_{\Omega_{\mathrm s}}(\widehat{\Theta}),
\end{equation}
where $\iota_{\Omega_{\mathrm s}}$ denotes the indicator function of $\Omega_{\mathrm s}$.
By the quantum Pinsker inequality, \(f_{\mathrm{vN}}\) is \(1\)-strongly convex on
\(\Omega_{\mathrm s}\) with respect to \(\|\cdot\|_{\mathrm{S}_1}\). Moreover, for any $\Theta$ in the relative interior
\(
\operatorname{ri}(\Omega)
=
\left\{
\Theta \succ 0
\;:\;
\Tr(\Theta)=1
\right\},
\)
its subdifferential admits the characterization
\(\partial f_{\mathrm{vN}}(\Theta)
=
\left\{
\ln (\Theta) + \lambda I
:\;
\lambda \in \mathbb{R}
\right\},\)
where $I$ denotes the identity matrix.
The conjugate of $f_{\mathrm{vN}}$ is given by
\begin{equation*}
f_{\mathrm{vN}}^*(Z)
=
\ln\!\bigl(\Tr(\exp(Z))\bigr)+1,
\end{equation*}
which is everywhere differentiable, with gradient
\begin{equation*}
\nabla f_{\mathrm{vN}}^*(Z)
=
\frac{\exp(Z)}{\Tr(\exp(Z))},
\end{equation*}
where $\exp(\cdot)$ denotes the matrix exponential.

To preserve the symmetry of the parameter estimates, we use the symmetric part of the gradient, defined by
\begin{equation}\label{eq:sym.gradient.definition}
\operatorname{Sym}\left( \nabla J_t(\widehat{\Theta}_t) \right) := \frac{1}{2} \left( \nabla J_t(\widehat{\Theta}_t) + \nabla J_t(\widehat{\Theta}_t)^\top \right) \nonumber.
\end{equation}
Indeed, for every $H\in\mathbb{S}^k$, we have
\small
\begin{align*}
\left\langle \operatorname{Sym}\bigl(\nabla J_t(\widehat{\Theta}_t)\bigr),H\right\rangle 
= \frac{1}{2}\left\langle \nabla J_t(\widehat{\Theta}_t),H\right\rangle + \frac{1}{2}\left\langle \nabla J_t(\widehat{\Theta}_t)^\top,H\right\rangle &= \frac{1}{2}\left\langle \nabla J_t(\widehat{\Theta}_t),H\right\rangle + \frac{1}{2}\left\langle \nabla J_t(\widehat{\Theta}_t),H^\top\right\rangle \\
&= \left\langle \nabla J_t(\widehat{\Theta}_t),H\right\rangle.
\end{align*}
\normalsize
Thus, when $\Theta,\widehat{\Theta}_t\in\mathbb{S}^k$, replacing the gradient by its symmetric part does not change the key inner-product identity \eqref{eq:key.inner.product} used in the stability and regret analyses of Section~\ref{sec:main.result}.

Applying the update law \eqref{eq:update.law.main}, any choice of
$Z_t\in\partial f_{\mathrm{vN}}(\widehat{\Theta}_t)$ can be written as
\(
Z_t
=
\ln \widehat{\Theta}_t + \lambda_t I
\)
for some $\lambda_t\in\mathbb{R}$.
The dual update
\(
Z_{t+1}
=
Z_t - \eta_{t+1}  \operatorname{Sym}(\nabla J_t(\widehat{\Theta}_t))
\)
thus yields
\begin{equation}
Z_{t+1}
=
\ln (\widehat{\Theta}_t)
-
\eta_{t+1}  \operatorname{Sym}(\nabla J_t(\widehat{\Theta}_t))
+
\lambda_t I.\nonumber
\end{equation}
Applying the primal recovery $\widehat{\Theta}_{t+1}=\nabla f_{\mathrm{vN}}^*(Z_{t+1})$ gives
\begin{tcolorbox}[ colback=black!2, colframe=black, boxrule=0.6pt, arc=0pt, left=6pt, right=6pt, top=4pt, bottom=4pt ]
\begin{equation}\label{eq:update.von}
\begin{split}
\widehat{\Theta}_{t+1}
&=
\frac{
\exp\bigl(\ln (\widehat{\Theta}_t) - \eta_{t+1}  \operatorname{Sym}(\nabla J_t(\widehat{\Theta}_t))\bigr)
}{
\Tr\left(
\exp\bigl(\ln (\widehat{\Theta}_t) - \eta_{t+1}  \operatorname{Sym}(\nabla J_t(\widehat{\Theta}_t))\bigr)
\right)
},\\
\eta_{t+1} &= \frac{2\,J_t(\widehat{\Theta}_t)} {\epsilon_{t+1} +\|\operatorname{Sym}(\nabla J_t(\widehat{\Theta}_t))\|_{\mathrm{S}_\infty}^2}.
\end{split}
\end{equation}
\end{tcolorbox}
The following corollary provides the resulting regret bound.
\begin{corollary}[Regret bound over the spectraplex]
Consider the closed-loop system \eqref{eq.dynamics} under the control law
\eqref{eq:control.law} and the update law \eqref{eq:update.von}. Suppose
that \(\Theta\in\Omega_{\mathrm s}\) and
\(\widehat{\Theta}_0=I/k\). Then
\begin{equation}\label{eq:corollary.spectraplex}
\operatorname{Reg}(T)
\le
\sqrt{
\frac{\ln (k)}{2}
\sum_{t=0}^{T-1}
\left(
\epsilon_{t+1}+
\|\operatorname{Sym}(\nabla J_t(\widehat{\Theta}_t))\|_{\mathrm{S}_\infty}^2
\right)
}.
\end{equation}
\end{corollary}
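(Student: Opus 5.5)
The plan mirrors the simplex corollary: we bound the Bregman term at the maximally mixed initialization by \(\ln(k)\) and then invoke Theorem~\ref{Theorem.regret}.

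First, we need Theorem~\ref{Theorem.regret} to apply with the symmetrized gradient in place of \(\nabla J_t(\widehat{\Theta}_t)\) and with dual norm \(\|\cdot\|_{\mathrm{S}_\infty}\). The estimates \(\widehat{\Theta}_t\) stay in \(\Omega_{\mathrm s}\subset\mathbb{S}^k\), and \(\Theta\in\Omega_{\mathrm s}\). By the identity shown above, \(\langle \operatorname{Sym}(\nabla J_t(\widehat{\Theta}_t)),\Theta-\widehat{\Theta}_t\rangle=\langle \nabla J_t(\widehat{\Theta}_t),\Theta-\widehat{\Theta}_t\rangle=-\|\widetilde{X}_{t+1}\|_{\mathrm F}^2\), so \eqref{eq:key.inner.product} is unchanged. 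The smoothness step in the proof of Theorem~\ref{Theorem.stability} then yields \eqref{eq:proof.ineq.tildex} with \(\|\operatorname{Sym}(\nabla J_t)\|_{\mathrm{S}_\infty}^2\). This relies on \(f_{\mathrm{vN}}^*\) being \(1\)-smooth with respect to \(\|\cdot\|_{\mathrm{S}_\infty}\) on \(\mathbb{S}^k\), which follows from the \(1\)-strong convexity given by quantum Pinsker. Substituting the step size in \eqref{eq:update.von} with \(\mu=1\) and repeating the telescoping and Cauchy--Schwarz argument of Theorem~\ref{Theorem.regret} gives
\begin{equation*}
\operatorname{Reg}(T)\le\sqrt{\frac{D_{f_{\mathrm{vN}}}(\Theta,\widehat{\Theta}_0;Z_0)}{2}\sum_{t=0}^{T-1}\Bigl(\epsilon_{t+1}+\|\operatorname{Sym}(\nabla J_t(\widehat{\Theta}_t))\|_{\mathrm{S}_\infty}^2\Bigr)}.
\end{equation*}

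Second, we compute the Bregman term. Since \(\widehat{\Theta}_0=I/k\succ0\) lies in the relative interior, any \(Z_0\in\partial f_{\mathrm{vN}}(\widehat{\Theta}_0)\) has the form \(Z_0=\ln(I/k)+\lambda_0 I=(\lambda_0-\ln k)I\), a multiple of \(I\). Because \(\Tr(\Theta)=\Tr(\widehat{\Theta}_0)=1\), we get \(\langle\Theta-\widehat{\Theta}_0,Z_0\rangle=0\), and hence
\begin{equation*}
D_{f_{\mathrm{vN}}}(\Theta,\widehat{\Theta}_0;Z_0)=f_{\mathrm{vN}}(\Theta)-f_{\mathrm{vN}}(\widehat{\Theta}_0)=\Tr(\Theta\ln\Theta)+\ln k.
\end{equation*}
Here the \(-\Tr(\cdot)\) terms cancel, and \(f_{\mathrm{vN}}(I/k)=-\ln k-1\). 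Writing \(\Tr(\Theta\ln\Theta)=\sum_i\lambda_i\ln\lambda_i\) over the eigenvalues \(\lambda_i\in[0,1]\) of \(\Theta\), with the convention \(0\ln0=0\), each term is nonpositive. So the divergence is at most \(\ln k\), and plugging this in gives \eqref{eq:corollary.spectraplex}.

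The main obstacle is the first step. We have to make sure the general theorem genuinely transfers when both the gradient is replaced by its symmetric part and the geometry is restricted to \(\mathbb{S}^k\). Concretely, we must check that \(Z_t\) stays symmetric (it does, since \(\ln\widehat{\Theta}_t\), \(I\) and the symmetrized gradients are symmetric). We must also check that the smoothness inequality for \(f_{\mathrm{vN}}^*\) only needs to hold along symmetric increments, which is guaranteed by strong convexity of \(f_{\mathrm{vN}}\) on the symmetric space.
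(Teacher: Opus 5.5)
Your proposal is correct and matches the paper's proof. The paper likewise writes $Z_0=(\lambda_0-\ln k)I$, uses $\Tr(\Theta)=\Tr(\widehat{\Theta}_0)=1$ to kill the linear term, bounds $D_{f_{\mathrm{vN}}}(\Theta,\widehat{\Theta}_0;Z_0)=\Tr(\Theta\ln\Theta)+\ln k\le\ln k$, and then invokes Theorem~\ref{Theorem.regret} with $\mu=1$, $\|\cdot\|_*=\|\cdot\|_{\mathrm{S}_\infty}$ and the symmetrized gradient; your extra checks (symmetry of $Z_t$, smoothness of $f^*_{\mathrm{vN}}$ along symmetric increments) only make explicit what the paper justifies in the paragraph preceding the corollary.
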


\begin{proof}
Let \(Z_0\in\partial f_{\mathrm{vN}}(\widehat{\Theta}_0)\). Since
\(\widehat{\Theta}_0=I/k\), we have
\(Z_0=\ln(\widehat{\Theta}_0)+\lambda_0 I=(-\ln (k)+\lambda_0)I\) for some
\(\lambda_0\in\mathbb R\). Hence, since
\(\Theta,\widehat{\Theta}_0\in\Omega_{\mathrm s}\), we have
\(\langle \Theta-\widehat{\Theta}_0,Z_0\rangle=0\). Therefore,
\begin{align}
D_{f_{\mathrm{vN}}}(\Theta,\widehat{\Theta}_0;Z_0)
=
f_{\mathrm{vN}}(\Theta)-f_{\mathrm{vN}}(\widehat{\Theta}_0)
-\langle \Theta-\widehat{\Theta}_0,Z_0\rangle =
\Tr(\Theta\ln(\Theta))+\ln (k) 
\le \ln (k), \label{eq:vn.initial.divergence}
\end{align}
where the last inequality follows from
\(\Tr(\Theta\ln(\Theta))=\sum_i\alpha_i\ln(\alpha_i)\le0\), with
\(\{\alpha_i\}\) denoting the positive eigenvalues of \(\Theta\).
Applying \eqref{eq:regret.bound.theorem3} with \(\mu=1\),
\(\|\cdot\|_*=\|\cdot\|_{\mathrm{S}_\infty}\), replacing the $\nabla J_t(\widehat{\Theta}_t)$ with $\operatorname{Sym}(\nabla J_t(\widehat{\Theta}_t))$ and  using
\eqref{eq:vn.initial.divergence} yields \eqref{eq:corollary.spectraplex}.
\end{proof}
\section{Extension to Stochastic Systems} \label{sec:stochastic}
In this section, we briefly discuss the effect of process noise on the proposed framework. Consider the stochastic system
\begin{equation}\label{eq.dynamics.noise}
    X_{t+1} = \mathcal{A}(X_t) + \mathcal{B}\big(\Theta \Psi(X_t,t) + U_t\big)+W_t,
\end{equation}
where \(W_t\in\mathbb{R}^{n_1\times n_2}\) is an independent, zero-mean white-noise process, independent of the initial state and parameter estimate. Under the
control law \eqref{eq:control.law}, the residual becomes
\small
\begin{align*}
  \widetilde X_{t+1}:=X_{t+1}
    -
    (\mathcal{A}-\mathcal{B}\mathcal{K})(X_t)-
    \mathcal{B}(U^{\rm d}_t) 
=
\mathcal B\big((\Theta-\widehat\Theta_t)\Psi(X_t,t)\big)+W_t .
\end{align*}
\normalsize
For each realization of \(X_t\), define the instantaneous loss
\begin{equation*}
 J_t(\widehat\Theta;X_t)
:=
\frac12
\left\|
\mathcal B\big((\Theta-\widehat{\Theta})\Psi(X_t,t)\big)
\right\|_{\mathrm{F}}^2,
\end{equation*}
Conditional on \(X_t\), this is a deterministic function of \(\widehat\Theta\). The corresponding expected regret is defined as
\begin{equation}\label{eq:regret.noise}
\mathrm{Reg}(T)
:=
\sum_{t=0}^{T-1} \mathbb{E} J_t(\widehat\Theta_t;X_t),
\end{equation}
where the expectation is taken over the randomness induced by the process noise. Even in the noisy setting, a controller of the form \eqref{eq:control.law} with full knowledge of the true parameter \(\Theta\), namely \(U_t(X_t,\Theta)\), incurs zero regret because \( J_t(\Theta;X_t)=0 \) for every realization of \(X_t\).

After observing \(X_{t+1}\), define the noisy gradient estimator
\begin{equation*}
\widehat G_t
:=
-\mathcal B^\top(\widetilde X_{t+1})\Psi(X_t,t)^\top.
\end{equation*}
Using the conditional zero-mean property of \(W_t\), we obtain
\begin{align*}
    \mathbb{E}\!\left[ \widehat G_t\mid X_t,\widehat{\Theta}_t\right]
    = -\mathcal{B}^\top
    \mathbb{E}\!\left[\widetilde{X}_{t+1}\mid X_t,\widehat{\Theta}_t\right]
    \Psi(X_t,t)^\top &= -\mathcal{B}^\top
    \mathcal{B}\big((\Theta-\widehat{\Theta}_t)\Psi(X_t,t)\big)
    \Psi(X_t,t)^\top\\&=\nabla J_t(\widehat{\Theta}_t;X_t).
\end{align*}
Thus, the update law \eqref{eq:update.law}, when driven by
\(\widehat G_t\), can be viewed as a stochastic online mirror-descent
scheme with an unbiased conditional gradient oracle. The corresponding
Polyak-type step size takes the form
\begin{equation}\label{eq:stochastic.stepsize}
\eta_{t+1}
=
\frac{\mu \|\widetilde{X}_{t+1}\|^2_{\mathrm{F}}}
{\epsilon_{t+1}+\|\widehat G_t\|_*^2}.
\end{equation}

The preceding discussion motivates the following question: To what extent can the stability and regret guarantees of Section~\ref{sec:main.result} be recovered in the presence of process noise?

\textbf{Empirical observations.}
In Section~\ref{sec:simu}, we empirically demonstrate that the proposed method
remains stable and continues to exhibit essentially dimension-independent
performance in the presence of process noise, while significantly outperforming
Euclidean-based adaptive schemes. In contrast, recursive least-squares-based
adaptive controllers may become ill-conditioned and exhibit unstable behavior
in high-dimensional noisy settings.

\textbf{Robust modifications.}
Standard robustification mechanisms, such as dead-zone modifications or
projection-based updates, can also be incorporated to guarantee boundedness of
the parameter estimates \(\widehat{\Theta}_t\). For instance, the constrained
mirror maps used in Section~\ref{sec:simplex} provide a natural way to enforce
such boundedness through the inclusion of a set-indicator function, which keeps
the iterates within a prescribed constraint set.

\textbf{Theoretical challenges.}
A complete theoretical treatment, however, requires overcoming two key
obstacles. First, the stochastic closed-loop system must be shown to satisfy
appropriate stability or moment bounds without imposing uniformly bounded
regressors. This is important because the deterministic analysis relies only on
the linear-growth condition \eqref{eq:linear-growth-regressor}, which is more
natural for dynamical systems but more delicate under process noise. Second,
the Polyak-type step size \eqref{eq:stochastic.stepsize} introduces additional
technical difficulties. Indeed, in the noisy setting, the step size depends on
the random residual \(\widetilde{X}_{t+1}\), and therefore explicitly depends
on the disturbance \(W_t\). Consequently, the step size and the gradient
estimator become statistically coupled, preventing a direct application of
standard unbiased stochastic approximation arguments.
Therefore, extending the regret bound \eqref{eq:regret.bound.theorem3} to the
stochastic setting likely requires either an alternative step-size mechanism,
such as predictable or delayed step sizes, or a more refined stochastic
analysis. We leave such developments for future work. Nevertheless, the unbiased-gradient structure derived above, together with the empirical evidence in Section~\ref{sec:simu}, suggests that the canonical
\(O(\sqrt{T})\) stochastic regret rate, with logarithmic dependence on the
ambient dimension for sparse and simplex-type parameter structures, should
remain achievable under suitable stability and moment conditions.

\section{Simulations: Online optimization with adaptive control}\label{sec:simu}

\begin{figure}
    \centering
    \subfloat[Noiseless setting, \(W_t=0\)]{
        \includegraphics[width=0.49\linewidth]{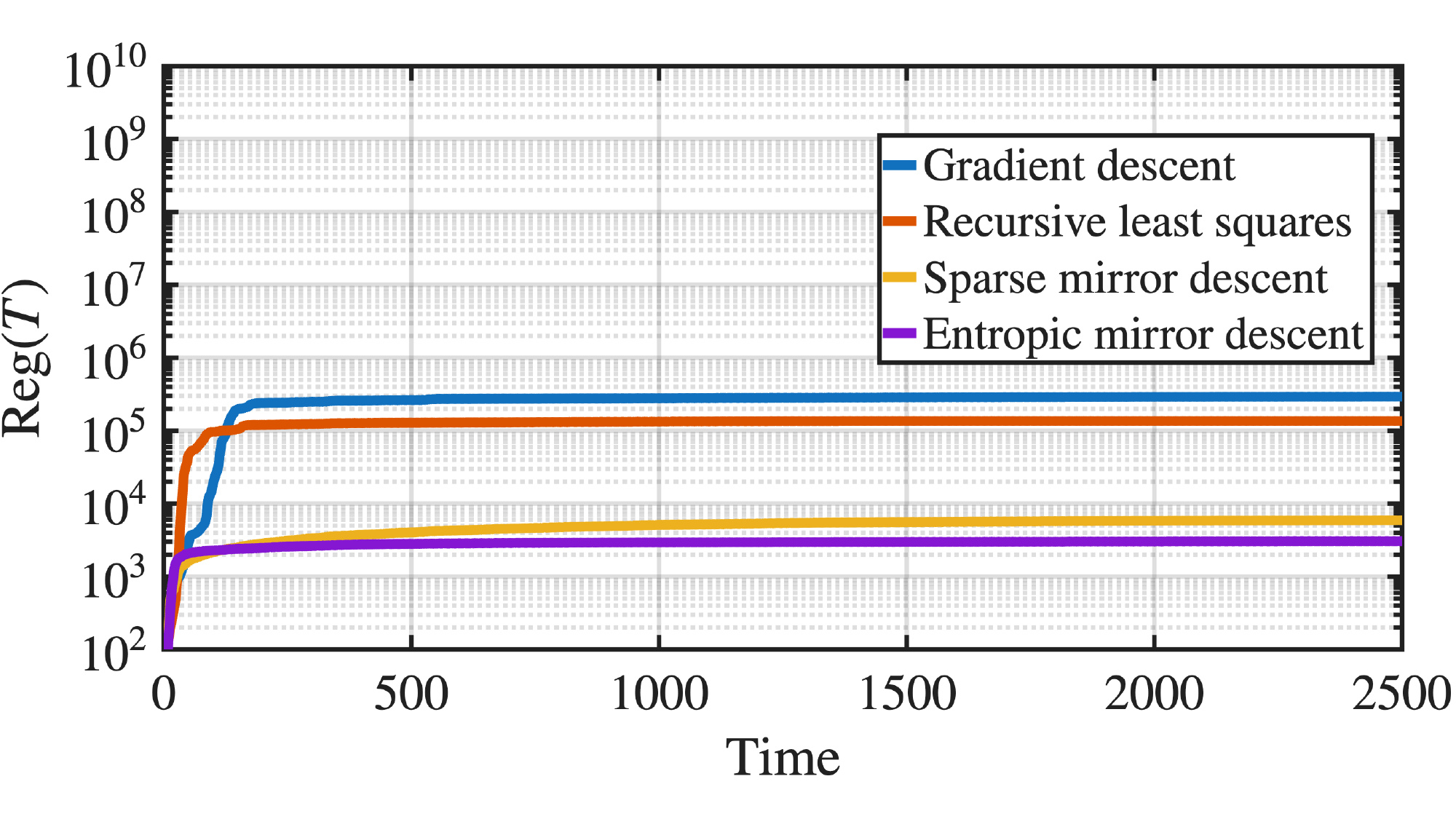}
    }
    \hspace{-0.4cm}
    \subfloat[Noisy setting, \(W_t\neq 0\)]{
        \includegraphics[width=0.49\linewidth]{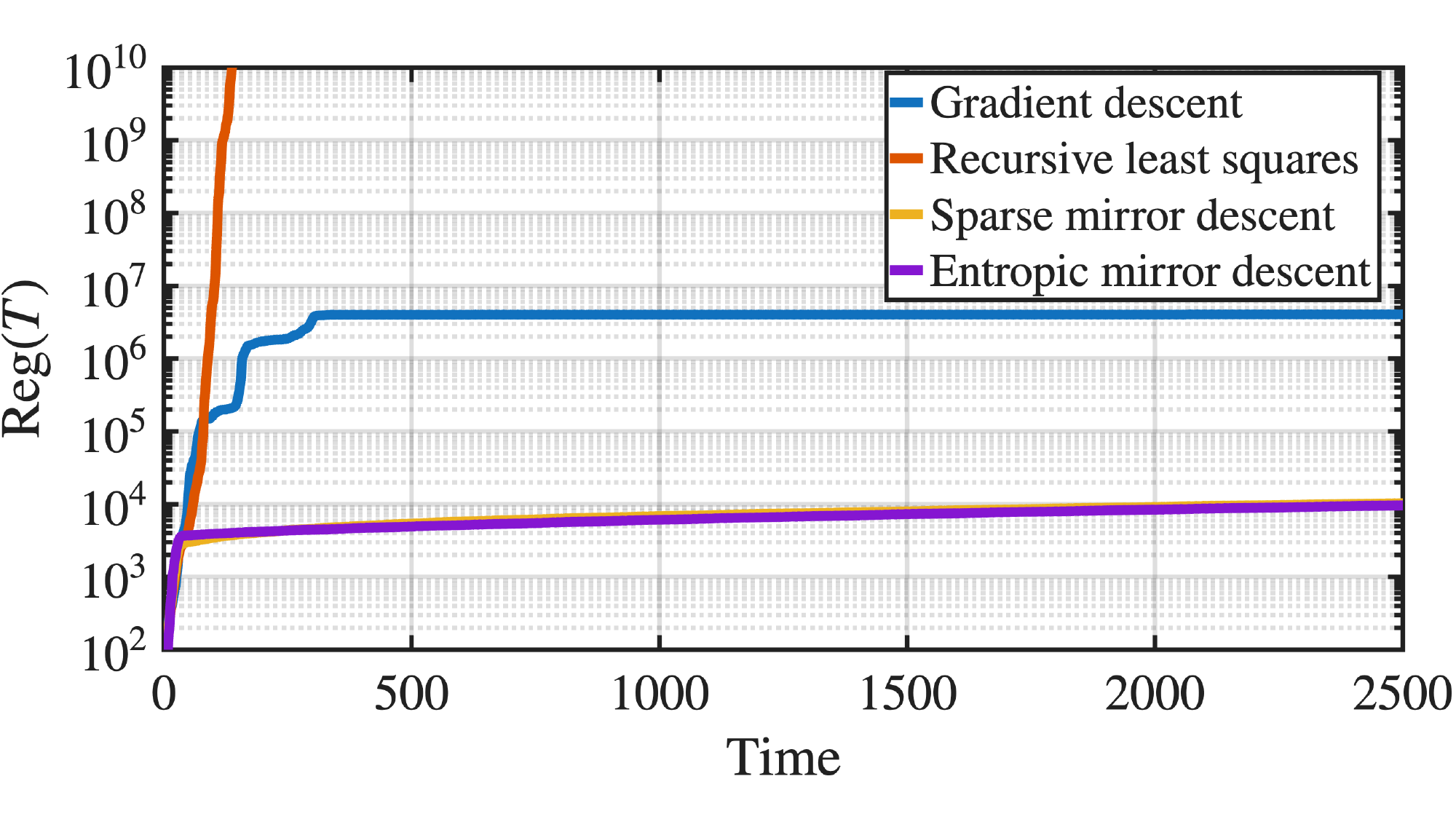}
    }
    \caption{Evolution of the regret \(\operatorname{Reg}(T)\) for the system \eqref{eq:sim.sys} under the control law \eqref{eq:control.sim} with four different parameter-update algorithms in the noiseless and noisy settings.}
    \label{fig:regret}
\end{figure}

\begin{figure*}[t]
    \centering
    \hspace{-0.4cm}
    \subfloat[Gradient descent]{
        \includegraphics[width=0.49\textwidth]{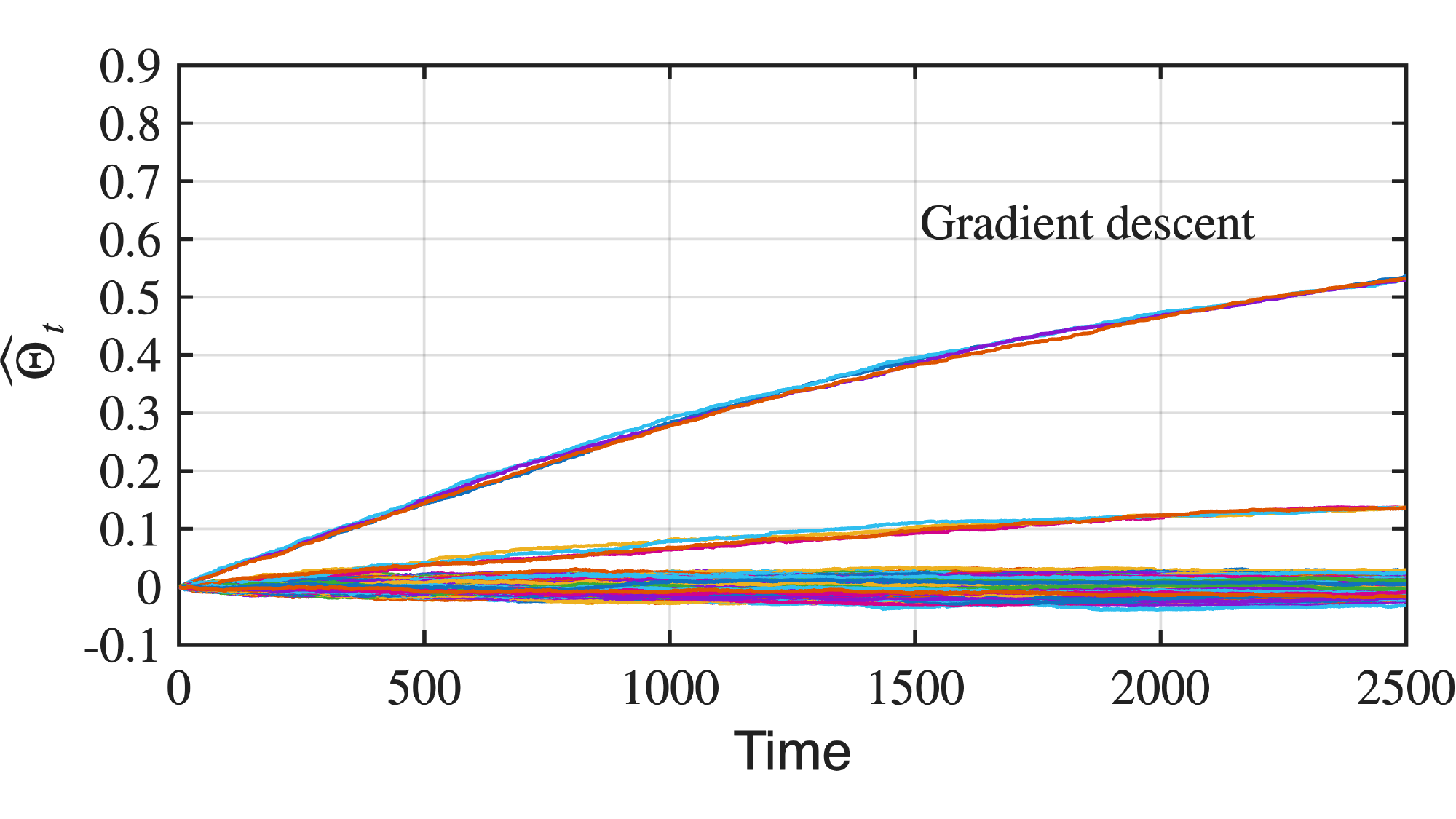}
        \label{fig:par_without_noise_gd}
    } \hspace{-0.4cm}
    \subfloat[Recursive least squares]{
        \includegraphics[width=0.49\textwidth]{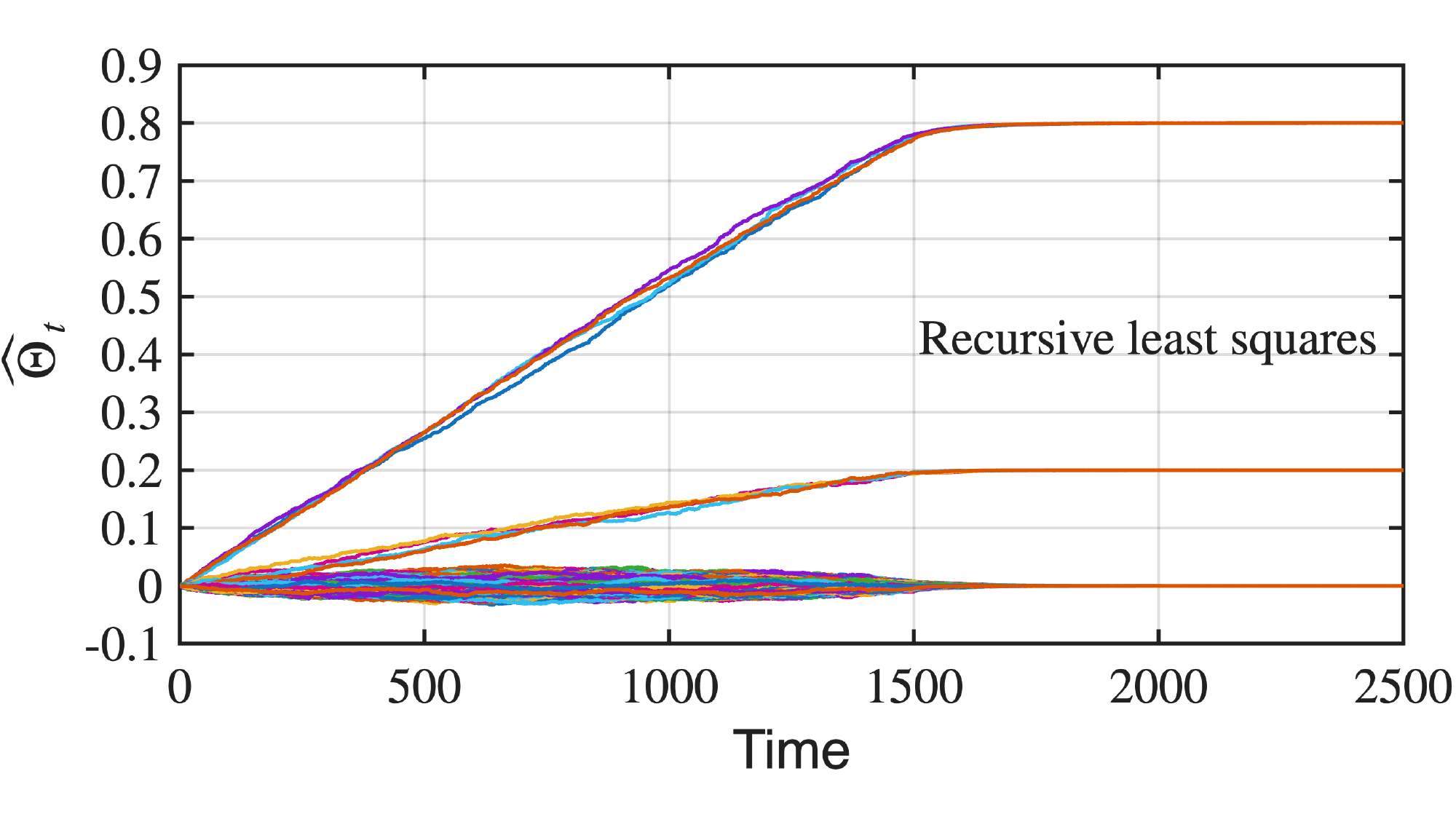}
        \label{fig:par_without_noise_rls}
    } \hspace{-0.4cm}
    \subfloat[Sparse mirror descent]{
        \includegraphics[width=0.49\textwidth]{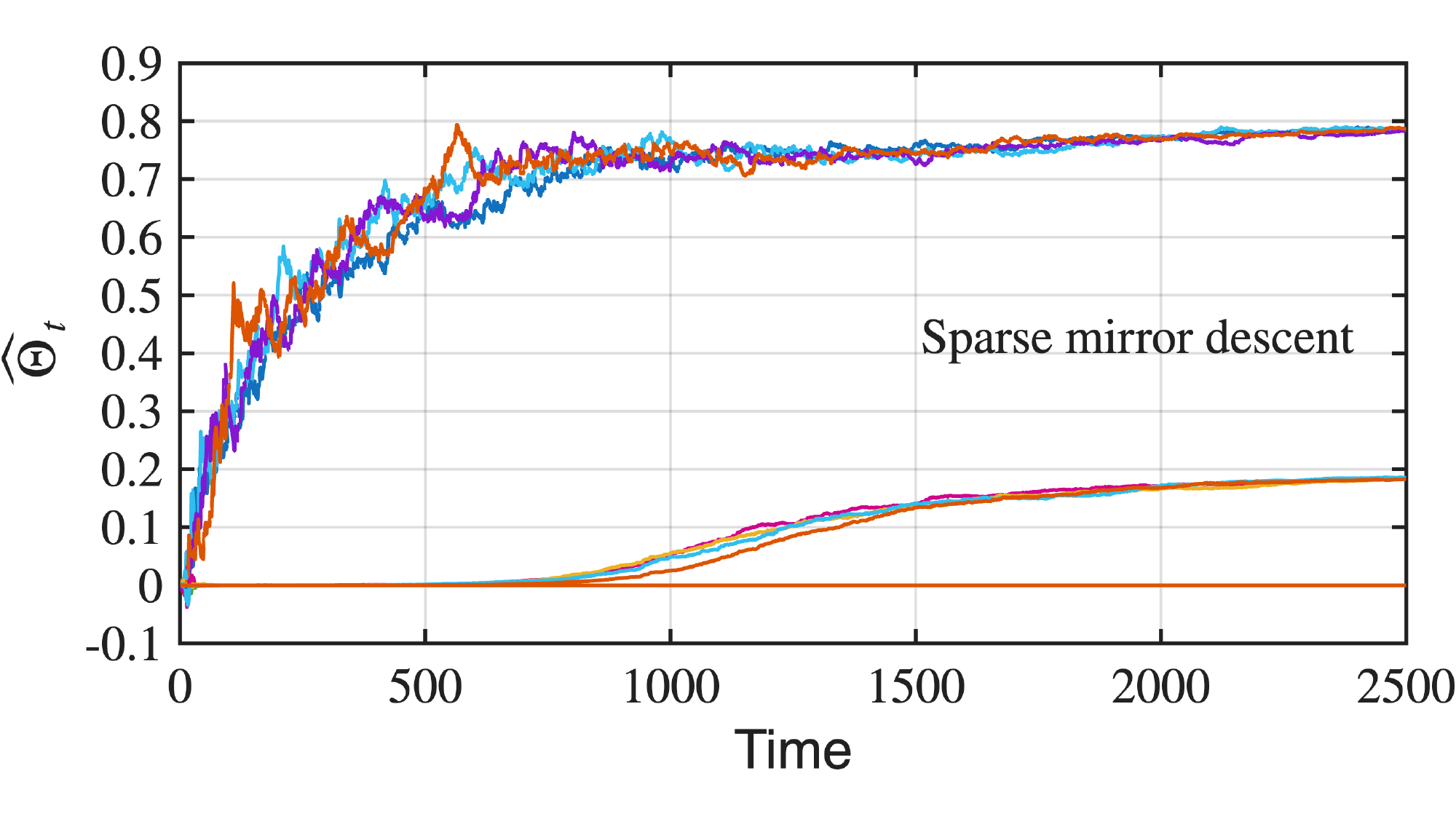}
        \label{fig:par_without_noise_sparse}
    } \hspace{-0.4cm}
    \subfloat[Entropic mirror descent]{
        \includegraphics[width=0.48\textwidth]{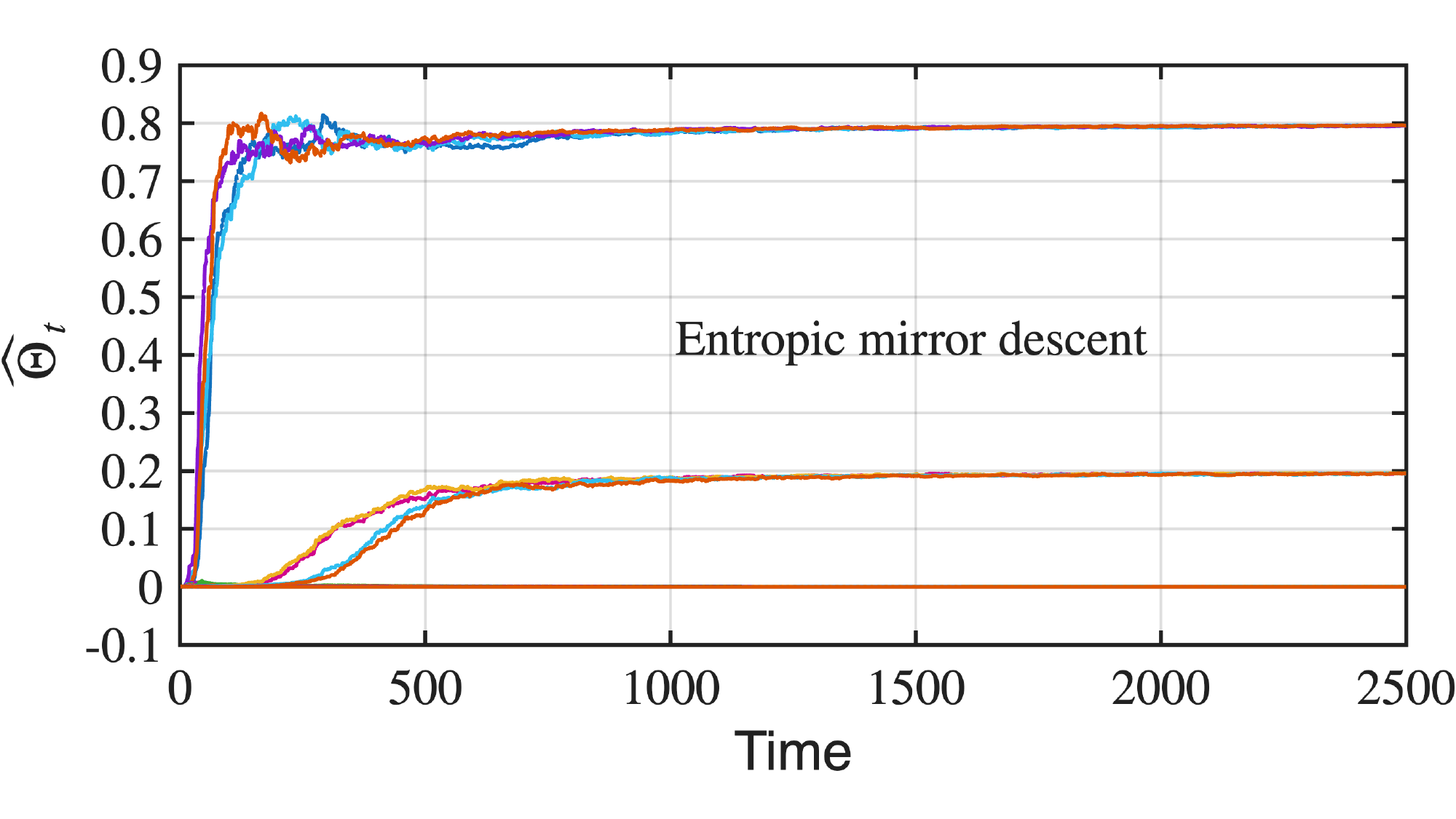}
        \label{fig:par_without_noise_entropic}
    }
    \caption{Parameter estimates in the noiseless setting, \(W_t=0\).}
    \label{fig:par_all_without_noise}
\end{figure*}

Consider a multi-agent system consisting of $N$ agents, whose collective dynamics are governed by
\begin{equation}\label{eq:sim.sys}
    X_{t+1} = \Theta \Psi(X_t, t) + U_t+W_t,
\end{equation}
where \(X_t\in\mathbb{R}^{n\times N}\) denotes the collective state at time \(t\), \(U_t\in\mathbb{R}^{n\times N}\) is the control input, and \(W_t\in\mathbb{R}^{n\times N}\) is a stochastic disturbance. The matrix $\Theta \in \mathbb{R}^{n \times k}$ represents unknown system parameters, while the regressor $\Psi(X_t, t)$ accounts for the inherent internal dynamics, potential inter-agent coupling, and exogenous disturbances. Furthermore, let $\{\mathcal{F}_t: \mathbb{R}^{n \times N} \rightarrow \mathbb{R}_{+}\}_{t \geq 0}$ be a sequence of time-varying, $\mathcal{L}$-smooth, and convex cost functions that characterize the performance objective at each time step.

The control objective is to design an adaptive controller such that the state of the system \eqref{eq:sim.sys} tracks the reference trajectory $\{X^{\mathrm d}_t\}_{t \in \mathbb{Z}_{+}}$ produced by the online gradient descent iteration
\begin{equation*}
    X_{t+1}^{\mathrm d} = X_t^{\mathrm d} - \frac{1}{\mathcal{L}}\nabla \mathcal{F}_t(X_t^{\mathrm d}).
\end{equation*}
For the numerical study, we consider the Laplacian-coupled quadratic objective
\small
\begin{equation}\label{eq:sim.cost}
    \mathcal{F}_t(X)
    =
    \frac{1}{2}\operatorname{Tr}\!\big((X-C_t)^\top H (X-C_t)\big)
    +
    \frac{1}{2}\operatorname{Tr}(X L_g X^\top),
\end{equation}
\normalsize
where $H\in\mathbb{S}_+^n$ is a symmetric positive semidefinite matrix, possibly singular, $C_t\in\mathbb{R}^{n\times N}$ is a time-varying reference matrix, and $L_g\in\mathbb{R}^{N\times N}$ is the graph Laplacian of the communication network. The second term in \eqref{eq:sim.cost} is the standard Laplacian regularizer
\begin{equation*}
       \operatorname{Tr}(X L_g X^\top)
    =
    \frac{1}{2}\sum_{i,j=1}^N w_{ij}\|x_i-x_j\|_2^2, 
\end{equation*}
which penalizes disagreement between neighboring agents. The gradient of \eqref{eq:sim.cost} is given by
\begin{equation*}
  \nabla \mathcal{F}_t(X)=H(X-C_t)+ X L_g.  
\end{equation*}  
Thus, the gradient is Lipschitz continuous with constant 
\begin{equation*}
 \mathcal{L} = \lambda_{\max}(H)+\lambda_{\max}(L_g).   
\end{equation*}
 In the simulations, $L_g$ is chosen as the Laplacian of an unweighted ring graph,
namely,
\begin{equation*}
(L_g)_{ij}=
\begin{cases}
2, & i=j,\\
-1, & j=i\pm 1 \pmod N,\\
0, & \text{otherwise},
\end{cases}
\end{equation*}
so that each agent communicates only with its two immediate neighbors. Moreover,
we set
\begin{equation*}
    H=\operatorname{diag}(h_1,\dots,h_n)\in \mathbb{S}_+^n,\quad
    h_i=
\begin{cases}
1, & i \text{ odd},\\
0, & i \text{ even},
\end{cases}
\end{equation*}
so that the tracking term penalizes only the odd-indexed state coordinates. Following the control law \eqref{eq:control.law}, we set 
\begin{equation}\label{eq:control.sim}
   U_t = -\widehat{\Theta}_t\Psi(X_t,t) + X_t^{\mathrm d} - \frac{1}{\mathcal L} \nabla\mathcal F_t(X_t^{\mathrm d}).
\end{equation}
In the simulations, we take \(n=N=4\) and \(k=3000\). The reference signal is chosen as
\[
    (C_t)_{ij}
    =
    (-1)^{i+j}\sin(0.05t),
    \quad i=1,\dots,n,\quad j=1,\dots,N.
\]
The feature map is set as
\begin{equation*}
    \Psi(X_t,t)
    =
    \Gamma\big(X_t\odot \sin(X_t)+\mathbf{1}_{n\times N}\big),
\end{equation*}
where \(\Gamma\in\mathbb{R}^{k\times n}\) is a random feature matrix with independent Rademacher entries, i.e.,
\begin{equation*}
    \mathbb{P}(\Gamma_{ij}=1)=\mathbb{P}(\Gamma_{ij}=-1)=\frac 12.
\end{equation*}
The true parameter matrix \(\Theta\) is chosen to be sparse and row-stochastic. Specifically, for each row \(i=1,\dots,n\), we set
\begin{equation*}
    \Theta_{ij}
    =
    0.8\times\mathbf{1}_{\{j=i\}}
    +
    0.2\times\mathbf{1}_{\{j=k-i\}},
    \qquad j=1,\dots,k.
\end{equation*}
where \(\mathbf{1}_{\{\mathcal E\}}\) equals \(1\) if \(\mathcal E\) holds and \(0\) otherwise. Thus, each row of \(\Theta\) has two nonzero entries, whose values sum to one.  

\begin{figure*}[t]

    \hspace{-0.4cm}
    \subfloat[Gradient descent]{
        \includegraphics[width=0.48\textwidth]{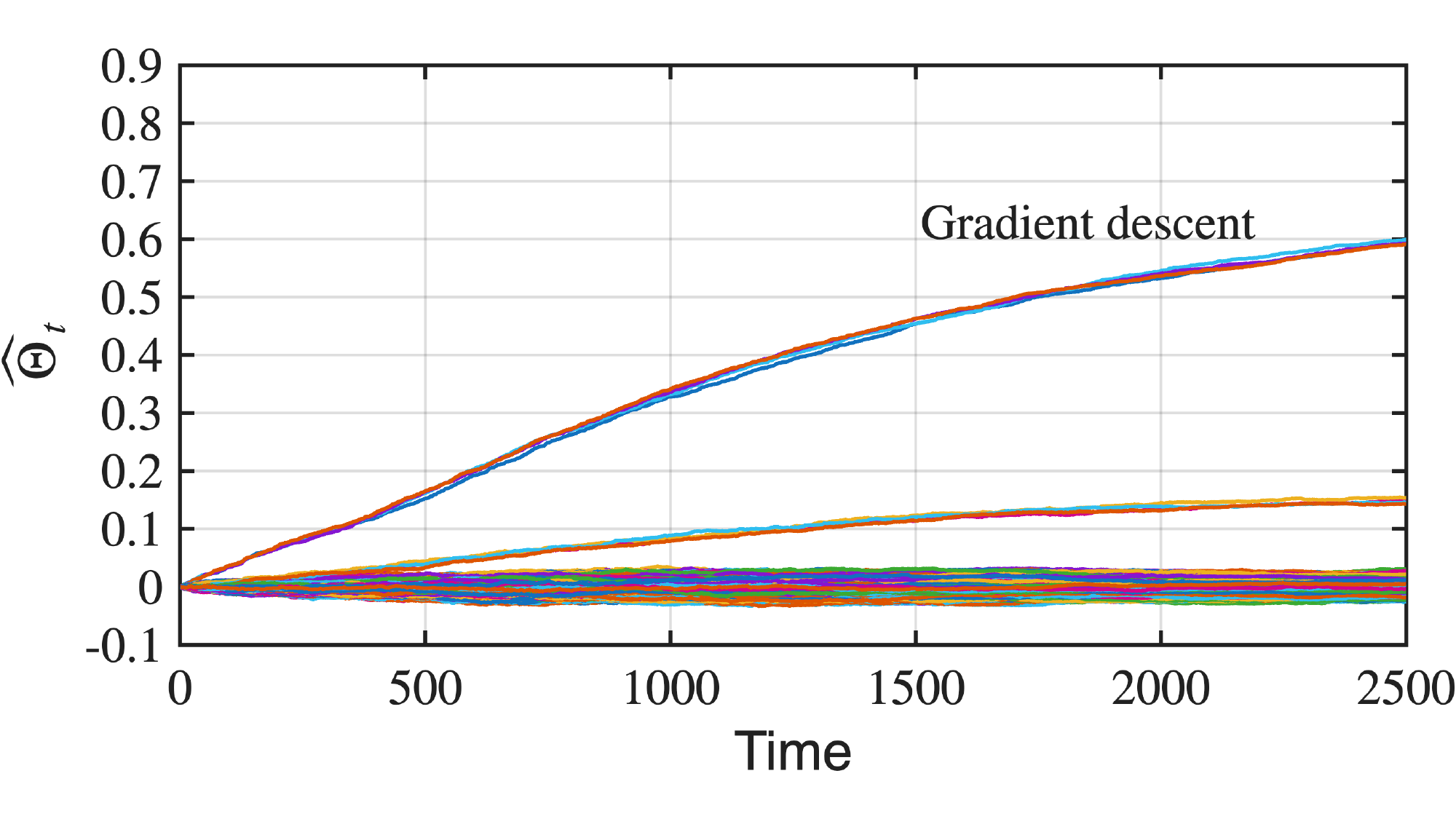}
        \label{fig:par_with_noise_gd}
    } \hspace{-0.4cm}
    \subfloat[Recursive least squares]{
    \includegraphics[width=0.48\textwidth]{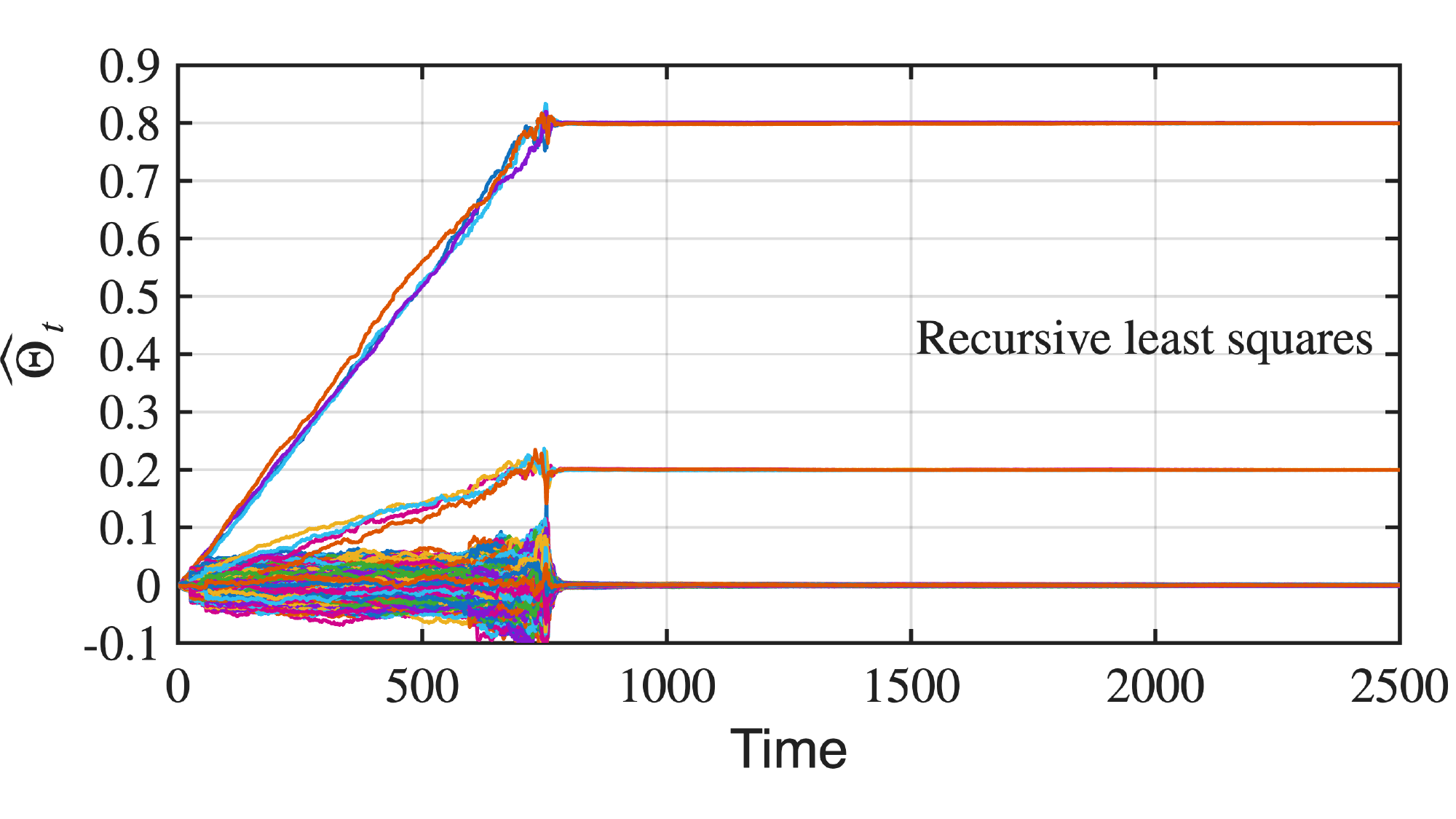}
        \label{fig:par_with_noise_rls}
    } \hspace{-0.4cm}
    \subfloat[Sparse mirror descent]{
        \includegraphics[width=0.48\textwidth]{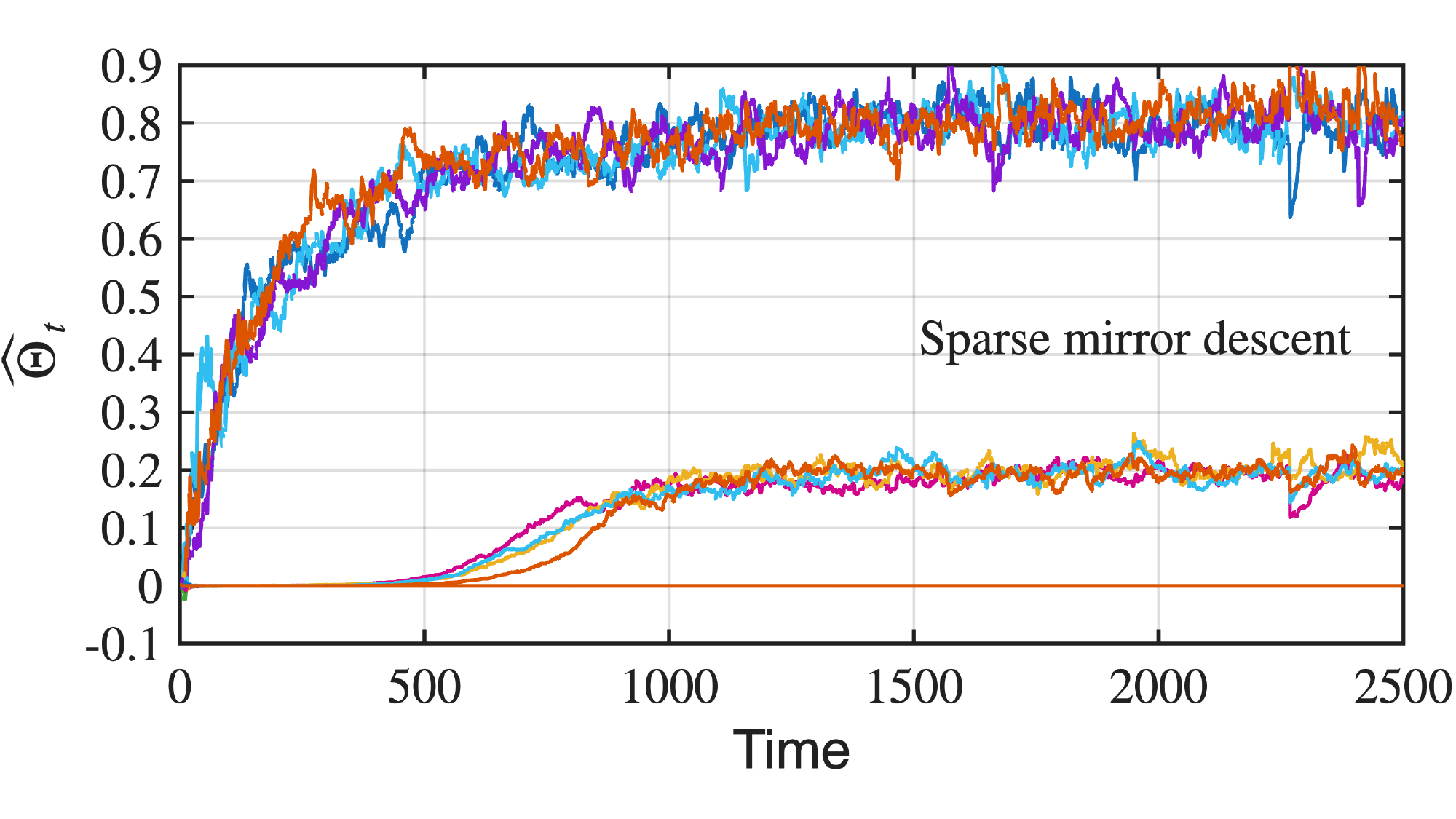}
        \label{fig:par_with_noise_sparse}
    } \hspace{-0.4cm}
    \subfloat[Entropic mirror descent]{
        \includegraphics[width=0.48\textwidth]{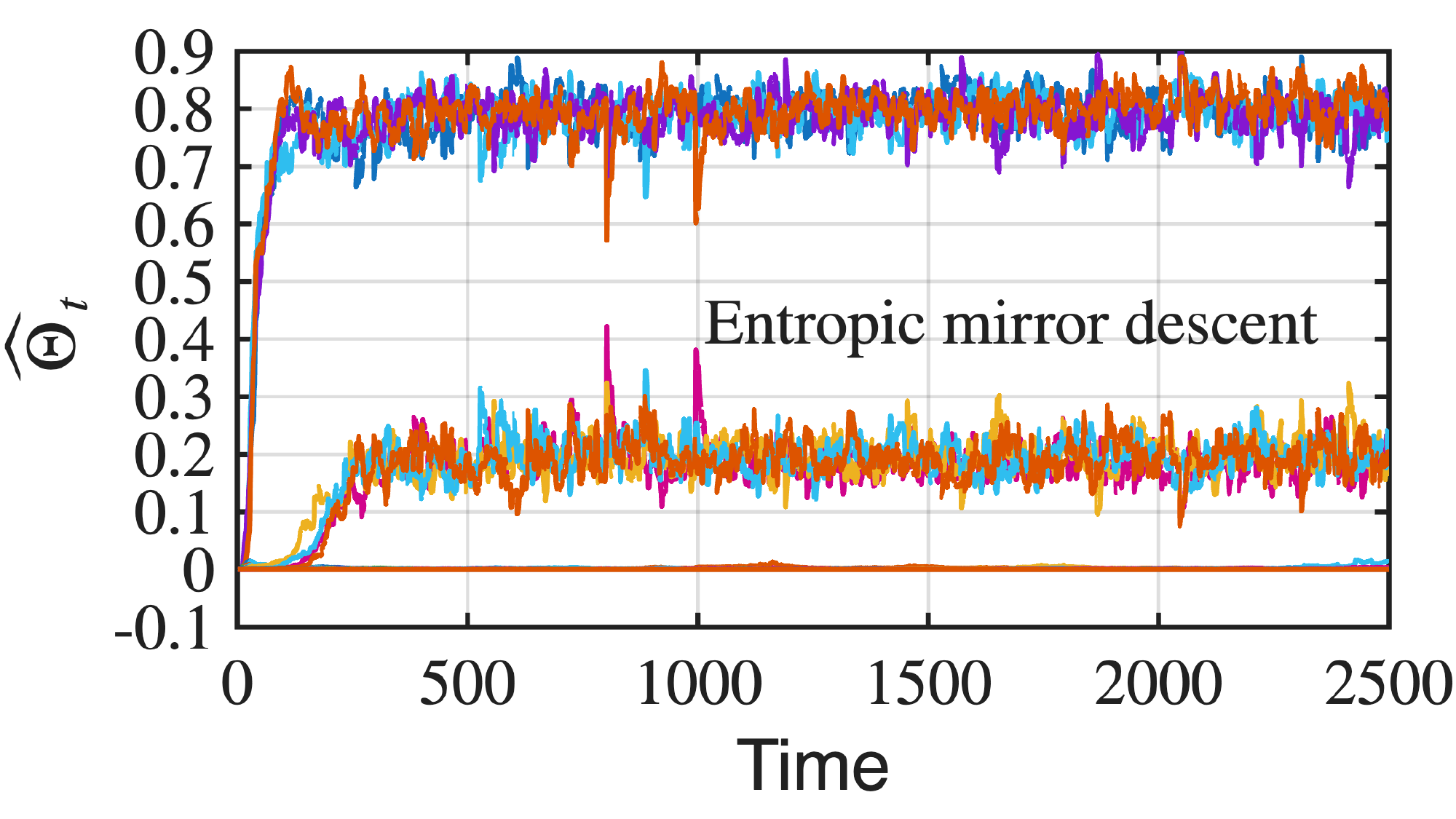}
        \label{ffig:par_with_noise_entropic}
    } 
    \caption{Parameter estimates in the noisy setting, \(W_t\neq 0\).}
    \label{fig:par_all_with_nose}
\end{figure*}
We consider two simulation scenarios: a noiseless setting with $W_t=0$, and a noisy setting where the entries of $W_t$ are generated independently from the uniform distribution on $[-1,1]$. In both cases, we compare four parameter-estimation schemes: recursive least squares (RLS); gradient descent, corresponding to \eqref{eq:update.law} with the mirror map $f(\widehat{\Theta})=\frac{1}{2}\Vert{}\widehat{\Theta}\Vert{}_{\mathrm F}^2$; the row-wise entropic mirror descent update \eqref{update.row.entropic}; and the sparse mirror descent update \eqref{eq:sparse.update}.

Figures~\ref{fig:regret}(a) and \ref{fig:regret}(b) illustrate the evolution of the regret $\operatorname{Reg}(T)$ in the noiseless and noisy settings, respectively. As observed, both the entropic and sparse mirror descent methods significantly outperform gradient descent and RLS. Furthermore, the results highlight varying degrees of robustness to process noise: while both entropic and sparse mirror descent maintain their performance, RLS degrades substantially. Specifically, although RLS outperforms gradient descent in the noiseless case, its regret grows rapidly in the presence of noise. This behavior suggests that, in high-dimensional regimes, first-order methods such as mirror descent can offer greater robustness than RLS. This disparity may be attributed to the fact that RLS relies on recursively updating an inverse covariance matrix, which is prone to ill-conditioning and heightened sensitivity to persistent noise in high dimensions.

Figures~\ref{fig:par_all_without_noise} and \ref{fig:par_all_with_nose} show the corresponding parameter estimates obtained by each algorithm in the noiseless and noisy settings, respectively. The results demonstrate that the entropic and sparse mirror descent updates are better able to identify the low-dimensional sparse structure of the true parameter matrix \(\Theta\). In contrast, gradient descent and RLS tend to spread the estimation error across many coordinates, producing dense parameter estimates even though the true parameter matrix is sparse. It is also worth noting that, for entropic and sparse mirror descent, the step-size denominator depends on the $\ell_\infty$-norm of the gradient, which is much smaller in high dimensions than the Frobenius norm utilized in gradient descent. Consequently, the resulting step size for gradient descent can be considerably smaller in high-dimensional regimes, causing its estimates to evolve at a much slower rate. 

\section{Conclusions and Future Directions}\label{sec:conclusions}

This paper studied adaptive control of discrete-time nonlinear systems with high-dimensional, matrix-valued parametric uncertainty. We showed that standard Euclidean adaptive schemes, such as recursive least squares and gradient-descent-based updates, can exhibit transient performance degradation as the ambient dimension increases, even when the true parameter has low intrinsic complexity. To address this limitation, we developed mirror-descent-based adaptive laws that exploit the geometry of structured parameter classes, including sparse, low-rank, simplex-constrained, and spectraplex-constrained uncertainty sets. The proposed framework guarantees boundedness of all closed-loop signals, asymptotic tracking, and regret bounds with at most logarithmic dependence on the ambient dimension.

Several directions remain open for future research. An important next step is to extend the regret bound in \eqref{eq:regret.bound.theorem3} to stochastic systems subject to process noise. Another direction is to sharpen the dependence of the regret bound on the horizon \(T\) under stronger excitation conditions, such as persistence of excitation \cite{WILLEMS2005325}. Finally, incorporating prediction-based online learning methods, including gradient prediction and optimistic mirror descent \cite{rakhlin,pedro}, may further improve transient performance in predictable or slowly varying environments.

\appendix
\section{Proof of auxiliary Lemmas}\label{sec:appendix}
\subsection{Proof of Lemma \ref{lemma.upper.bound} }
Throughout the proof, $c,c_1,$ and $c_2$ denote positive constants whose values may change from line to line. Since all norms are equivalent in finite-dimensional spaces, there exists a constant $c>0$ such that
\begin{equation*}
\| \nabla J_t(\widehat{\Theta}_t)\|_{*} \le c \| \nabla J_t(\widehat{\Theta}_t)\|_{\mathrm{F}}.
\end{equation*}
Using the definition of $ \nabla J_t(\widehat{\Theta}_t)$ and the submultiplicativity of the Frobenius norm, we find that
\begin{align}
\| \nabla J_t(\widehat{\Theta}_t)\|_{*}  \le c \| \nabla J_t(\widehat{\Theta}_t)\|_{\mathrm{F}}
= c \big\|\mathcal{B}^{\top}(\widetilde{X}_{t+1}) \Psi(X_t,t)^{\top}\big\|_{\mathrm{F}} &\le c \|\mathcal{B}^{\top}(\widetilde{X}_{t+1})\|_{\mathrm{F}} \, \|\Psi(X_t,t)\|_{\mathrm{F}} \nonumber\\&\le c \|\mathcal{B}^{\top}\|_{\mathrm{op}} \, \|\widetilde{X}_{t+1}\|_{\mathrm{F}} \, \|\Psi(X_t,t)\|_{\mathrm{F}}.
\label{eq:proof.lemma.bound.E}
\end{align}
where $\|\mathcal{B}^\top\|_{\mathrm{op}}$ denotes the operator norm of $\mathcal{B}^\top$.     Next, consider the recursion 
\begin{equation*}
    X_{t+1}= (\mathcal{A} - \mathcal{B}\mathcal{K}) (X_t)  +\widetilde{X}_{t+1}+ \mathcal{B}(U^{\rm d}_t).
\end{equation*}
Since the closed-loop operator $\mathcal{A}-\mathcal{B}\mathcal{K}$ is Schur stable, there exist constants $c>0$ and $\gamma\in(0,1)$ such that \(\|(\mathcal{A}-\mathcal{B}\mathcal{K})^t\|_{\mathrm{op}}\le c\gamma^t\) for all \(t\ge 0\). Iterating the recursion yields
\begin{align*}
    \|X_t\|_{\mathrm{F}}
    &\le c\gamma^t\|X_0\|_{\mathrm{F}}
    +c\sum_{i=0}^{t-1} \gamma^{\,t-1-i}(\|\widetilde{X}_{i+1}\|_{\mathrm{F}}+\|\mathcal{B}(U^{\rm d}_i)\|_{\mathrm{F}}) \nonumber\\
    &\le c\gamma^t\|X_0\|_{\mathrm{F}}
    +\frac{c}{1-\gamma}\max_{0\le i\le t-1}(\|\widetilde{X}_{i+1}\|_{\mathrm{F}}+\|\mathcal{B}(U^{\rm d}_i)\|_{\mathrm{F}}) \nonumber\\
    &\leq c_1 +c_2  \max_{0\le i\le t-1}\|\widetilde{X}_{i+1}\|_{\mathrm{F}},
\end{align*}
 where in the second inequality we used the fact that the desired input $U^{\rm d}_t$ is bounded. Furthermore, due to the linear growth condition on $\Psi(X_t,t)$, we have
\begin{align*}
\|\Psi(X_t,t)\|_{\mathrm{F}} &\leq c_1+c_2 \|X_t\|_{\mathrm{F}}\leq c_1 +c_2  \max_{0\le i\le t-1}\|\widetilde{X}_{i+1}\|_{\mathrm{F}}.
\end{align*}
As a result, we find that
\begin{equation*}
    \|\widetilde{X}_{t+1}\|_{\mathrm{F}}\, \|\Psi(X_t,t)\|_{\mathrm{F}} \leq c_1 + c_2 \max_{0\le i\le t}\|\widetilde{X}_{i+1}\|_{\mathrm{F}}^2.
\end{equation*}
Finally, combining this bound with \eqref{eq:proof.lemma.bound.E}, we obtain
\begin{align*}
    \| \nabla J_t(\widehat{\Theta}_t)\|_{*}^2 \leq c \|\mathcal{B}^\top\|^2_{\mathrm{op}}\,\|\widetilde{X}_{t+1}\|^2_{\mathrm{F}}\,\|\Psi(X_t,t)\|^2_{\mathrm{F}}  \leq c_1 +c_2\max_{0\le i\le t}\|\widetilde{X}_{i+1}\|_{\mathrm{F}}^4,
\end{align*}
as claimed.

{
\printbibliography
}

\end{document}